\newfont{\lie}{eufm10 at 11pt}
\newfont{\liepequenos}{eufm10 at 10pt}
\newfont{\corpos}{msbm10 at 11pt}
\newfont{\corpospequenos}{msbm10 at 10pt}
\newcommand{\C}{\mbox{\corpos \symbol{67}}}          
\newcommand{\Octoni}{\mbox{\corpos \symbol{79}}}     
\newcommand{\Proj}{\mbox{\corpos \symbol{80}}}        
\newcommand{\R}{\mbox{\corpos \symbol{82}}}          
\newcommand{\Z}{\mbox{\corpos \symbol{90}}}          
\newcommand{\g}{\mbox{\lie g}}       %
\newcommand{\sol}{\mbox{\lie so}}
\newcommand{\rr}{\rightarrow}
\newcommand{\lrr}{\longrightarrow}
\newcommand{\rn}{\R^{n}}                    %
\newcommand{\hnab}{{\cal H}^{D}}             %
\newcommand{\calR}{{\cal R}}             %
\newcommand{\na}{{\nabla}}
\newcommand{\nag}{{\nabla^g}}
\newcommand{\Tr}[1]{{\mathrm{Tr}}\,{#1}}
\newcommand{\End}[1]{{\mathrm{End}}\,{#1}}
\newcommand{\Id}{{\mathrm{1}}}
\newcommand{\dx}{{\mathrm{d}}}
\newcommand{\bx}{{\circlearrowright}}
\newcommand{\inv}[1]{{#1}^{-1}}
\newcommand{\papa}[2]{\frac{\partial#1}{\partial#2}}
\newcommand{\dastC}{{\dx*\phi\,_{\mathrm{curv}}}}
\newcommand{\dastT}{{\dx*\phi\,_{\mathrm{tors}}}}
\newcommand{\astil}{{\tilde{*}}}
\newcommand{\vol}{{\mathrm{vol}}}
\newcommand{\Vol}{{\mathrm{Vol}}}
\newcommand{\ric}{{\mathrm{Ric}\,}}
\newcommand{\Dstar}{D^\star}
\newcommand{\gwistor}{{\em{gwistor}}}
\newtheorem{teo}{Theorem}[section]
\newtheorem{coro}{Corollary}[section]
\newtheorem{prop}{Proposition}[section]
\newenvironment{Defi}[1][Definition]{\begin{trivlist}
\item[\hskip \labelsep {\bfseries #1}]}{\end{trivlist}}
\newenvironment{Rema}[1][Remark]{\begin{trivlist}
\item[\hskip \labelsep {\bfseries #1}]}{\end{trivlist}}
\newenvironment{meuenumerate}
{\begin{enumerate}
  \setlength{\itemsep}{1pt}
  \setlength{\parskip}{0pt}
  \setlength{\parsep}{0pt}}
{\end{enumerate}}
\def\cyclic{\mathop{\kern0.9ex{{+}
\kern-2.2ex\raise-.28ex\hbox{\Large\hbox{$\circlearrowright$}}}}\limits}
\title{On the $G_2$ bundle of a Riemannian 4-manifold}
\author{R. Albuquerque\footnote{Departamento de Matem\'atica da Universidade de \'Evora and Centro de Investiga\c c\~ao em Matem\'atica e Aplica\c c\~oes (CIMA-U\'E), Rua Rom\~ao Ramalho, 59, 671-7000 \'Evora, Portugal.}\\ rpa@dmat.uevora.pt}
\begin{document}


\maketitle


\begin{abstract}

We study the natural $G_2$ structure on the unit tangent sphere bundle $SM$ of any given orientable Riemannian 4-manifold $M$, as it was discovered in \cite{AlbSal}. A name is proposed for the space. We work in the context of metric connections, or so called geometry with torsion, and describe the components of the torsion of the connection which imply certain equations of the $G_2$ structure. This article is devoted to finding the $G_2$-torsion tensors which classify our structure according to the theory in \cite{FerGray}.

\end{abstract}

\tableofcontents

\vspace*{10mm}

{\bf Key Words:} metric connections, torsion, tangent sphere bundle, Einstein manifold, $G_2$ structure, $G_2$ torsions.

\vspace*{4mm}

{\bf MSC 2010:} Primary:  53C10, 53C20, 53C25; Secondary: 53C28

\vspace*{10mm}

The author acknowledges the support of Funda\c{c}\~{a}o Ci\^{e}ncia e Tecnologia, Portugal, either through CIMA-U\'E, Centro de Investiga\c c\~ao em Matem\'atica e Aplica\c c\~oes da Universidade de \'Evora and through SFRH/BSAS/895/2009.

\markright{\sl\hfill  R. Albuquerque \hfill}

\vspace*{10mm}

\section{Recalling the theory}
\label{Rott}

\subsection{Introduction}

A $G_2$ structure on a 7-dimensional Riemannian manifold $(N,g)$ is given by a reduction from the principal orthonormal frame $O(7)$-bundle to a principal $G_2$-subbundle. The Lie group $G_2$ coincides with $\mathrm{Aut}(\Octoni)$ and so is contained in $SO(7)$. The structure is characterized by a 3-form $\phi$ on $N$ which, for any vector fields $X,Y,Z$, reads $\phi(X,Y,Z)=\langle XY,Z\rangle$ with $XY$ denoting the octonionic product. The manifold $N$ admits a non-degenerate $\phi$ if the first two Stiefel-Whitney classes vanish.

The geometry of $G_2$ structures has been having much attention. Ahead of the topological determination, the Riemannian holonomy of a pair $(N,\phi)$ is included in $G_2$ if, and only if, $\phi$ is parallel. It was proved by A. Gray the latter differential condition corresponds with $\phi$ being harmonic. But it is a particularly difficult problem to find compact examples satisfying such equations, corresponding with one of the possible cases shown in the celebrated list of irreducible holonomies, as one may deduce from \cite{Bryant1,BrySal,Joy}.

It is then natural to impose conditions on $G_2$ structures on Riemannian 7-manifolds which solve less stringent equations. We refer to those following from the decomposition into irreducible components of $\Lambda^*T^*N$ as a $G_2$-module, which yield particularly interesting notions in degrees 4 and 5, where $\dx\phi$ and $\dx*\phi$ belong, with equivalents in degrees 2 and 3 by Hodge-$*$ duality. They were first discovered in \cite{FerGray}. Nowadays there is manifest interest in co-calibrated structures, defined by the condition $\delta\phi=0$, as we see from \cite{Agri,FriIva1,FriIva2} and some literature in Physics. In particular, the case of nearly parallel structures, $\dx\phi=c*\phi$ with $c$ constant, cf. \cite{FriKaMoSe}.

In this work we resume to a particular construction on the unit tangent sphere bundle,
\begin{equation}\label{spherebundle0} 
SM=\bigl\{u\in TM:\ \|u\|=1\bigr\},
\end{equation}
of an oriented Riemannian 4-manifold $M$. This was founded in \cite{AlbSal} and consists of smoothly assigning to each $T_uSM,\ u\in SM$, the structure of the imaginary part of an octonionic structure on $T_uTM$, having $\R u$ as the reals line. Merely induced by the orientation and metric on $M$, we coin this natural $G_2$-structure associated to the base $M$ with the name of ``gwistor space'' of $M$. In particular, every $SM$ is a spin manifold.

So far, it has been deduced how the respective 3-form $\phi$ is written, computed its exterior differential and co-differential and shown some consequences and examples. A \gwistor\ space $G_2$-structure cannot be harmonic. However, it is co-calibrated if, and only if, $M$ is an Einstein manifold, which leads to a plenty of new examples. This theorem was first proved in \cite{AlbSal}, up to a slight mistake which has now been cleared and duely submitted to the respective journal. Here we prove the result again. Before, the construction used the Levi-Civita connection of $M$ to produce a splitting of $TSM$, a canonical splitting into horizontal and vertical subspaces. Clearly, this is also possible if we use any other metric connection, so we develop here in this direction. We determine the Levi-Civita connection of $SM$ and compute the relevant derivatives of the $G_2$ structure. Thus one of our purposes is to analyse the variations of the $G_2$ structure on the unit tangent sphere bundle depending on the torsion of the connection on $M$. The subgroups $SO(3)$ and $SO(4)$ play some intricate role, yet to be truly understood. Our main objective here is to deduce the irreducible components of $\dx\phi$ and $\delta\phi$ under the exceptional Lie group $G_2$, also called torsion tensors, and hence be able to classify our \gwistor\ space. We ask the reader to carefully distinguish between the two meanings of the word `torsion' depending on the context. The difficulties found for the expression of those tensors, theorem \ref{astorsoesenfim}, explain to some extent the ever-hidden structure of $G_2$ next to 4-manifolds.

Many problems and questions may be raised about \gwistor\ space, as the reader may deduce for himself. We notice  the interesting feature of $SM$ which follows by changing the sign of the metric only on the direction of the $S^3$ fibres. This corresponds with the split octonions and the non-compact dual of $G_2$. Hence we may construct a $\widetilde{G}_2$ structure using the same set of ideas.

From the beginning, our work has been much influenced from a good understanding of twistor theory according to \cite{Obri}. We express our thanks to Ilka Agricola, Thomas Friedrich and Ivan Minchev for fruitful conversations. As this work was partly done during the author's sabbatical leave at Philipps Universit\"at, Marburg, we kindly acknowledge the hospitality and friendly mathematical environment.

The author dedicates this work to the friend and great man Carlos Grilo.

\subsection{The unit tangent sphere bundle, its metric and Levi-Civita connection}
\label{TsbimaLCc}

Let $(M,g)$ be a smooth orientable Riemannian manifold of dimension $n$. To represent $g$ we also  use $\langle\:,\:\rangle$. Let $D$ be a metric connection on $M$, ie. a linear connection for which the metric is parallel. We denote
\begin{equation}\label{spherebundle} 
SM=\bigl\{u\in TM\,:\ \|u\|=1\bigr\},
\end{equation}
the total space of the unit tangent sphere bundle, and we let $f$ denote the projection onto the base $M$. Notice $SM$ is always orientable, regardless of $M$. The connection $D$ induces a direct sum decomposition $T\,SM={\cal V}\oplus\hnab$ into vertical and horizontal distributions:  ${\cal V}=\ker\dx f$ and $\hnab\stackrel{\dx f}\simeq f^*TM$. Furthermore, the bundle $\cal V$ may be identified with the vector subbundle of $\inv{f}TM$ (occasionally we use the notation $\inv{f}$ to refer to the vertical side) such that ${\cal V}_u=u^\perp\subset T_{f(u)}M$. In the following we denote by $U$ the `vertical section' $U:SM\rr \inv{f}TM$ defined by $U_u=u$. The  \textit{ubiquitous} character of $U$ is even repeated since the vector appears also on the horizontal subspace.

Recall that $\hnab=\{X\in T\,SM|\ (f^*D)_XU=0\}$ and $f^*D_{\,Y}U=Y$ for any $Y\in{\cal V}$, a result whose proof the reader may see in \cite{AlbSal}. In this regard, it is worthwhile to think of $U$ as if it would vary only on vertical directions, and to notice that, for two given sections $X,Z\in\Omega^0(TM)$, ie. two vector fields on $M$, the vertical part of $\dx X(Z)$ is precisely $D_ZX$. Indeed, in this case $X^*U_x=U_{X(x)}=X_x$, for $x\in M$, hence
\[ (\dx X(Z))^v=f^*D_{\dx X(Z)}U=(X^*f^*D)_ZX^*U=D_ZX.\]
We use $\,\cdot\,^v$ and $\,\cdot\,^h$ to denote the obvious projections.

Now we endow $SM$ with the unique Riemannian structure for which the two distributions $\hnab$ and $\cal V$ sit orthogonally inside $T\,SM$ and the identifications with $f^*TM$ are isometric. This metric is the induced metric on $SM$ from the well known Sasaki metric on $TM$. We still denote it by $g=\langle\:,\:\rangle$.

The tangent bundle $TTM$ has a natural metric connection: $D^*=f^*D\oplus\inv{f}D$; it preserves the splitting into the horizontal and vertical, hence $D^*$-parallel, tangent subbundles. Both projections $\,\cdot\,^v$ and $\,\cdot\,^h$ are parallel.

$T\,SM$ inherits a metric connection too via the pull-back connection $D^*$, still preserving the splitting and which we shall denote by $\Dstar$. Notice the star, depicting the fact that on vertical tangent directions we must add a correction term
\begin{eqnarray}\label{correcterm}
\Dstar_Y X^v &=& D^*_Y X^v-\langle D^*_Y X^v,U\rangle U\ =\  D^*_Y X^v+\langle X^v,Y^v\rangle U.
\end{eqnarray}
Clearly $-\langle D^*_Y X,U\rangle =\langle X,D^*_YU\rangle=\langle X^v,Y^v\rangle$ for $X,Y\in T\,SM$. This stems from the geometry of the sphere with the round metric, in quest of a metric linear connection.

We shall use explicitly the \textit{mirror} isomorphism $\theta:\hnab\rr\inv{f}TM\supset{\cal V}$
defined by $D^*$-parallel isometric identification of the pull-back bundle of $TM$. We may extend $\theta$ by 0 to the vertical tangent bundle, so in fact we have
\begin{equation}\label{teta}
\theta:T\,TM\lrr T\,TM,\qquad\ \ \theta^t\theta(X^h)=X^h,\qquad\theta\theta^t(X^v)=X^v.
\end{equation}

Let $R^*=f^*R^{D}=R^{f^*D}$ denote the curvature of $f^*D$. The latter identity follows by tensoriality. We use $R^D(X,Y)=D_XD_Y-D_YD_X-D_{[X,Y]}$ and view $R^*U$ as the following $\cal V$-valued 2-form $\calR$ over $SM$:
\begin{equation}\label{definicaocalR}
 \calR_u(X,Y)=R^*(X,Y)U_u=\sum_{i=1}^{n-1}\langle R^D(f_*X,f_*Y)u,e_i\rangle e_{i+3},\qquad\forall u\in SM,
\end{equation}
where $e_0,\ldots,e_{n-1}$ is an orthonormal frame of $\hnab$ such that $e_0=\theta^tU$ and $e_{i+n-1}=\theta e_i,\ i\geq1$. Notice $R^*U\perp U$ because $D$ is an $\sol(n)$-connection.

Let $T^*$ denote the $\hnab$-valued form $f^*T^D$, the pull-back of the torsion tensor of $D$: $T^D(X,Y)=D_XY-D_YX-[X,Y]$. Like $R^*$, $T^*$ varies only on two horizontal directions.
\begin{prop}
 $T^{\Dstar}=T^*+\calR$.
\end{prop}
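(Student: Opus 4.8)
The plan is to peel off the correction term so that the problem reduces to the torsion of the simpler pull-back connection $D^*$, and then to read off the two components of $T^{D^*}$ separately along the splitting $T\,SM={\cal V}\oplus\hnab$. First I would note that the correction in (\ref{correcterm}) is symmetric in its arguments. Writing $X=X^h+X^v$ and using that $D^*$ preserves the splitting, the horizontal part of $D^*_YX$ is orthogonal to $U$, so only the vertical part gets corrected; thus $\Dstar_YX=D^*_YX+\langle X^v,Y^v\rangle U$ for all $X,Y\in\Omega^0(T\,SM)$. Substituting into $T^{\Dstar}(X,Y)=\Dstar_XY-\Dstar_YX-[X,Y]$, the two correction terms $\langle Y^v,X^v\rangle U$ and $\langle X^v,Y^v\rangle U$ cancel by symmetry of the metric, leaving $T^{\Dstar}(X,Y)=D^*_XY-D^*_YX-[X,Y]=T^{D^*}(X,Y)$. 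One checks in passing that this is tangent to $SM$, since the $U$-components of $D^*_XY$ and $D^*_YX$ both equal $-\langle X^v,Y^v\rangle$ and cancel. It then remains to identify $T^{D^*}$ with $T^*+\calR$.

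The vertical part is the heart of the matter, and here I would use the ubiquitous section $U$ together with the recalled identity $D^*_ZU=Z^v$, valid for every $Z$ by the characterization of $\hnab$ and ${\cal V}$. Since $R^*=R^{D^*}$ on the pull-back bundle, differentiating $U$ twice gives
\[ R^*(X,Y)U=D^*_XD^*_YU-D^*_YD^*_XU-D^*_{[X,Y]}U=D^*_XY^v-D^*_YX^v-[X,Y]^v, \]
and, because $D^*$ respects the splitting, the right-hand side is exactly the vertical part of $T^{D^*}(X,Y)$. Hence $T^{D^*}(X,Y)^v=R^*(X,Y)U=\calR(X,Y)$ for all $X,Y$; this automatically vanishes whenever an argument is vertical, since $f_*$ then annihilates it, in agreement with $\calR$ living on two horizontal directions.

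For the horizontal part $T^{D^*}(X,Y)^h=(D^*_XY^h)^h-(D^*_YX^h)^h-[X,Y]^h$ I would appeal to tensoriality of the torsion and to the fact that $T^*=f^*T^D$ only sees horizontal directions, so that it suffices to test on horizontal lifts $X^h,Y^h$ of vector fields on $M$. There $D^*_{X^h}Y^h=(D_XY)^h$, and since the lifts are $f$-related one has $[X^h,Y^h]^h=[X,Y]^h$, so the horizontal part collapses to the lift of $D_XY-D_YX-[X,Y]=T^D(X,Y)$, namely $T^*$; on the remaining mixed and vertical--vertical pairs both sides vanish (for $T^{D^*}$ this follows from the vertical formula above and a short direct check of the horizontal part), which extends the identity tensorially to all of $T\,SM$. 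Assembling the two components yields $T^{\Dstar}=T^{D^*}=T^*+\calR$. The only genuine care needed is bookkeeping: keeping the two pull-back copies of $TM$ straight when differentiating $U$ repeatedly, and verifying the vanishings on vertical inputs so that the relations established on lifts hold for every pair of vector fields.
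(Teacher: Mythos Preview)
Your argument is correct and follows the same route as the paper: cancel the symmetric correction term in (\ref{correcterm}) to reduce to $T^{D^*}$, then identify the vertical part with $\calR$ via $D^*_ZU=Z^v$ and the curvature formula, and read off the horizontal part as $T^*$. The paper's proof is simply a terser version of yours---it folds the cancellation of the correction terms into the vertical computation and dismisses the horizontal part with ``occurs by definition,'' whereas you spell out the lift argument and the vanishings on mixed and vertical pairs.
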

\begin{proof}
 Only the vertical part here is unexpected:
\begin{eqnarray*} 
 (T^{\Dstar}(X,Y))^v &=& \Dstar_XY^v-\Dstar_YX^v-[X,Y]^v\ =\  D^*_XD^*_YU+\langle X^v,Y^v\rangle U \\ & &   \quad\ \ \ \ -D^*_YD^*_XU-\langle X^v,Y^v\rangle U-D^*_{[X,Y]}U\ =\ \calR(X,Y).
\end{eqnarray*}
The horizontal part occurs by definition.    
\end{proof}
\begin{teo}
The Levi-Civita connection $\nag$ of $SM$ is given by
\begin{equation}\label{lcTM}
\nag_XY=\Dstar_XY-\dfrac{1}{2}\calR(X,Y)+A(X,Y)+\tau(X,Y)
\end{equation}
where $A,\tau$ are $\hnab$-valued tensors defined by
\begin{equation}\label{AlcTM}
\langle A(X,Y),Z\rangle=\dfrac{1}{2}\langle \calR(X,Z), Y\rangle + \dfrac{1}{2}\langle \calR(Y,Z), X\rangle
\end{equation}
and
\begin{equation}\label{taulcTM}
\tau(X,Y,Z)=\langle \tau(X,Y),Z^h\rangle= \dfrac{1}{2}\bigl(T^*(Y,X,Z)+T^*(X,Z,Y)+T^*(Y,Z,X)\bigr),
\end{equation}
with $T^*(X,Y,Z)=\langle T^*(X,Y),Z\rangle$, for any vector fields $X,Y,Z$ over $SM$.
\end{teo}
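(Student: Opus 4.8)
The plan is to identify $\nag$ through its two defining properties: it is the unique linear connection on $SM$ that is simultaneously metric and torsion-free. Since the metric $g$ on $SM$ is $\Dstar$-parallel (the pull-back connection $\Dstar$ is metric by construction, the correction term \eqref{correcterm} being precisely what restores metricity along the sphere fibres), the difference $B:=\nag-\Dstar$ is a genuine $(1,2)$-tensor. Metricity of both connections forces each $B_X$ to be skew-adjoint, $\langle B(X,Y),Z\rangle=-\langle B(X,Z),Y\rangle$, while vanishing torsion of $\nag$ together with the Proposition forces the antisymmetric part $B(X,Y)-B(Y,X)=-T^{\Dstar}(X,Y)=-T^*(X,Y)-\calR(X,Y)$.

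First I would observe that these two linear conditions determine $B$ uniquely, by the usual contorsion (Koszul-type) formula
\[2\langle B(X,Y),Z\rangle=-\langle\Theta(X,Y),Z\rangle+\langle\Theta(Y,Z),X\rangle-\langle\Theta(Z,X),Y\rangle,\]
where $\Theta:=T^{\Dstar}=T^*+\calR$. The verification that this $B$ is skew-adjoint in $(Y,Z)$ and has the prescribed antisymmetric part in $(X,Y)$ uses nothing beyond the antisymmetry of $\Theta$ in its first two arguments, and is the standard algebraic solution of the system above; existence and uniqueness of $\nag$ then guarantee that the resulting connection is the Levi-Civita connection.

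The remaining step is bookkeeping: split $\Theta=T^*+\calR$ and reorganise the cyclic expression into the three advertised pieces. For the curvature part, using that $\calR$ is ${\cal V}$-valued and antisymmetric in $(X,Y)$, I would rewrite $\langle\calR(Z,X),Y\rangle=-\langle\calR(X,Z),Y\rangle$ and collect terms to recover exactly $-\frac{1}{2}\calR(X,Y)+A(X,Y)$ with $A$ as in \eqref{AlcTM}; note that $A$ is symmetric in $X,Y$, consistent with its contributing nothing to the torsion. For the torsion part, the same reorganisation, together with $T^*(Y,X,Z)=-T^*(X,Y,Z)$, turns $\frac{1}{2}\bigl(-T^*(X,Y,Z)+T^*(Y,Z,X)-T^*(Z,X,Y)\bigr)$ into the cyclic combination $\tau(X,Y,Z)$ of \eqref{taulcTM}.

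The main obstacle I anticipate is purely one of care rather than depth: $\calR$ and $T^*$ are antisymmetric only in their first pair of slots, that is, they are not $3$-forms, so one must resist over-symmetrising, and one must keep track of the value-space constraints ($\calR$ vertical, $T^*$ horizontal) so that the skew-adjointness check and the appearance of $Z^h$ in the definition of $\tau$ are consistent. A cleaner but longer alternative would be to run the Koszul formula for $\nag$ from scratch, expressing the brackets $[X,Y]$ and the derivatives $X\langle Y,Z\rangle$ through $\Dstar$ and its torsion $T^*+\calR$; this avoids quoting the contorsion formula but reproduces the same three terms after identical bookkeeping.
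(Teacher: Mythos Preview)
Your proposal is correct and follows essentially the same logic as the paper: exploit that $\Dstar$ is metric, so the difference $\nag-\Dstar$ is determined by the torsion $T^{\Dstar}=T^*+\calR$ via the two defining properties of the Levi-Civita connection. The only nuance is direction: the paper simply \emph{verifies} that the displayed formula is metric and torsion-free (noting $\tau(X,Y)-\tau(Y,X)=-T^*(X,Y)$ and that $\tau$ is skew in $Y,Z$), whereas you \emph{derive} it from the contorsion identity and then split $\Theta=T^*+\calR$; both routes are equivalent and your bookkeeping of the $\calR$- and $T^*$-pieces into $-\tfrac12\calR+A$ and $\tau$ is accurate.
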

\begin{proof}
This is a straightforward verification of identities $\dx\langle X,Y\rangle=\langle \nag X,Y\rangle+\langle X,\nag Y\rangle$ and $T^\nag=0$. This checking was done in \cite{AlbSal} for the torsion free case; the present one being equally trivial. Notice 
\[ \tau(X,Y)-\tau(Y,X)=-T^*(X,Y),  \]
as we wish, and the 3-tensor $\tau$ is skew in $Y,Z$. 
\end{proof}
In sum, $D^*$ and $\Dstar$ preserve types, $\calR$ takes vertical values and $A$ and $\tau$ take horizontal values. Moreover, $A(X,Y)=0$ if $X,Y$ are both horizontal or both vertical, whereas $\calR$ and $\tau$ vanish if one direction is vertical. The $A$ tensor may be written as
\begin{equation}\label{AlcTMhvsv}
\langle A(X,Y),Z^h\rangle=\dfrac{1}{2}\langle \calR(X^h,Z^h), Y^v\rangle + \dfrac{1}{2}\langle \calR(Y^h,Z^h), X^v\rangle .
\end{equation}
Notice the well known result following trivially from the formula above: $\hnab$ corresponds with an integrable distribution if, and only if, $D$ is flat. Also, by the skew-symmetries in $X,Y$, $\tau=0$ if, and only if, $T^D=0$.

\section{The natural $G_2$ structure on $SM$}
\label{TcG2s}

\subsection{\textit{Gwistor} space}

Any quaternionic-Hermitian structure on an oriented Euclidean 4-vector space $Q$ arises from a choice of a unit direction $u\in S^3\subset Q$, from the metric and the orientation. Indeed, $u$ plays the role of a unit and a generator of the real line in the quaternions; the volume form $\vol_Q$ coupled with $u$ determines a cross product in $u^\perp$. We use 
\begin{equation}
 \langle X\times Y,Z\rangle=\vol(u,X,Y,Z),\qquad\ \ \forall X,Y,Z\in u^\perp
\end{equation}
and this is enough to have a quaternionic-Hermitian line:
\begin{equation}
 (\lambda_1u+X)(\lambda_2u+Y)\ =\ (\lambda_1\lambda_2-\langle X,Y\rangle)u+\lambda_2X+\lambda_1Y+X\times Y
\end{equation}
with conjugation map $\overline{\lambda u+X}=\lambda u-X$. Then an octonionic structure is built on $Q\oplus Q$ by the well known Cayley-Dickson process and thence a Lie group $\mathrm{Aut}(Q\oplus Q)$ isomorphic to $G_2$ is well defined. For more details see \cite{AlbSal,HarLaw}.

Now let $(M,g)$ be an oriented Riemannian 4-manifold. Consider again the differential geometric setting of the unit tangent sphere $SM$ of last section. We define an octonionic structure on $T_uTM\simeq T_xM\oplus T_xM$, for any $u\in S_xM$ and $x\in M$, by the process above. The imaginary part of such non-associative division algebra is precisely $T_uSM$. And this is the natural $G_2$-structure of an oriented Riemannian 4-manifold we have been referring to. Because of the tautological construction and its resemblance with twistor theory we propose the following.
\begin{Defi}
We call \gwistor\ {\em{bundle of $M$ associated to $D$}} or simply \gwistor\ \textit{space of $M$} to the unit tangent sphere bundle $SM$ of an oriented Riemannian 4-manifold $M$ together with its natural $G_2$ structure induced from a linear metric connection $D$.
\end{Defi}
It is possible to define a line of split-octonions in the same way as above, ie. with a pseudo metric of signature $(4,-4)$ on $Q\oplus Q$ and the same algebraic process. The automorphisms group is $\tilde{G}_2$, the non-compact dual of the semisimple Lie group $G_2$. This translates into pseudo-Riemannian $SM$ over the oriented Riemannian $M$, providing a new way to construct $\tilde{G}_2$ structures. The structure equations and the integrability conditions are still unknown. If one compares, already twistor theory allows analogous variation.

We also have the following important remark, to which we are led on the course of emergence of \gwistor\ space. Recall the endomorphism $\theta:TTM\rr TTM$ defined in (\ref{teta}) and let $\xi=2\theta^tU$ and $\eta=\frac{1}{2}\mu=\frac{1}{2}(\theta^tU)^\flat$. Also let $\varphi=\theta-U\mu-\theta^t$.
\begin{teo}[Y. Tashiro]
 $(SM,\frac{1}{4}g,\xi,\eta,\varphi)$ is an almost contact structure.
\end{teo}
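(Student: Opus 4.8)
The plan is to verify the two defining identities of an almost contact structure, namely $\eta(\xi)=1$ and $\varphi^2=-\Id+\eta\otimes\xi$, after first settling the less obvious point that $\varphi$ is genuinely an endomorphism of $T\,SM$. Throughout I would use only the algebraic relations already packaged in (\ref{teta}) together with the extension of $\theta$ (and its transpose $\theta^t$) by zero: on horizontal vectors $\theta^t\theta(X^h)=X^h$, on vertical ones $\theta\theta^t(X^v)=X^v$, while $\theta$ annihilates ${\cal V}$ and $\theta^t$ annihilates $\hnab$. I would also record that $\mu=(\theta^tU)^\flat$ satisfies $\mu(X)=\langle\theta^tU,X\rangle=\langle e_0,X\rangle$, so that $\mu$ kills every vertical vector and also kills $U$.

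The conceptually crucial first step is that $\theta$ by itself does \emph{not} preserve $T\,SM$, and that the correction term $-U\mu$ is tailored to fix exactly this. For $X\in T\,SM$ the vector $\theta X^h$ lies in $\inv{f}TM$, and its component along the normal direction $U$ of $SM\subset TM$ is $\langle\theta X^h,U\rangle=\langle X^h,\theta^tU\rangle=\mu(X)$. Hence $\theta X^h-\mu(X)U$ is precisely the projection of $\theta X^h$ onto the vertical tangent bundle ${\cal V}=U^\perp$. I would therefore first establish that $\varphi(X)=\bigl(\theta X^h-\mu(X)U\bigr)-\theta^tX^v$ is an honest section of $T\,SM=\hnab\oplus{\cal V}$, with vertical part $(\varphi X)^v=\theta X^h-\mu(X)U$ and horizontal part $(\varphi X)^h=-\theta^tX^v$.

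The evaluation $\eta(\xi)=\tfrac12\mu(2\theta^tU)=\langle\theta^tU,\theta^tU\rangle=1$ is immediate from the isometry property of $\theta^t$. For the square I would apply $\varphi$ twice using the decomposition above. One first checks $\mu(\varphi X)=0$, since each of the three summands of $\varphi X$ is annihilated by $\mu$; then, using $\theta\theta^t(X^v)=X^v$ and $\theta^t\theta(X^h)=X^h$, the computation should collapse to $\varphi^2X=-X^v-\bigl(X^h-\mu(X)\theta^tU\bigr)=-X+\mu(X)\theta^tU$. Recognising $\mu(X)\theta^tU=\eta(X)\xi$ gives exactly $\varphi^2=-\Id+\eta\otimes\xi$. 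I would close by observing that $\xi$ is a unit vector and $\eta$ its dual for $\tfrac14 g$, and that $\varphi$ sends the adapted orthonormal frame $e_0,\dots,e_6$ to $0,e_4,e_5,e_6,-e_1,-e_2,-e_3$; from this the compatibility $\langle\varphi X,\varphi Y\rangle=\langle X,Y\rangle-\eta(X)\eta(Y)$ for the metric $\tfrac14 g$ follows by inspection, so the structure is in fact almost contact metric.

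The main obstacle is entirely located in the second step. Because $\theta^tU=e_0\neq0$, the map $\theta$ pushes the horizontal $e_0$-direction onto the normal vector $U$ and thus leaves $T\,SM$; the real content of the theorem is that $-U\mu$ cancels precisely this normal component so that $\varphi$ descends to a bona fide $(1,1)$-tensor on $SM$. Once this cancellation is understood, the remaining identities are routine bookkeeping with the isometries $\theta$ and $\theta^t$.
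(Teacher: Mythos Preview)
Your verification is correct: the key observation that the term $-U\mu$ cancels the normal component of $\theta X^h$ so that $\varphi$ lands in $T\,SM$, followed by the direct computation of $\varphi^2$ using $\theta\theta^t(X^v)=X^v$ and $\theta^t\theta(X^h)=X^h$ together with $\mu(\varphi X)=0$, is exactly the right argument and is carried out without error. Note, however, that the paper itself does not prove this statement at all; it simply attributes the result to Tashiro and refers the reader to \cite{Blair,Tash}, remarking that the structure is already known in any dimension. Your proposal therefore supplies a self-contained proof where the paper offers only a citation, and in that sense goes beyond what the paper does.
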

The theorem, or rather the structure, is already known and valid in any dimension, cf. \cite{Blair,Tash}. Further properties may be deduced within the lines of this article.

\subsection{The structure equations}
\label{tse}

We first introduce a tool which will be frequently used. The proofs of some assertions are canonical or straightforward.
\begin{Rema}
Given any $p$-tensor $\eta$ and any endomorphisms $B_i$ of the tangent bundle we let $\eta\circ (B_1\wedge\ldots\wedge B_p)$ denote the new $p$-tensor defined by
\begin{equation}\label{esttensorcont}
\eta\circ (B_1\wedge\ldots\wedge B_p)(Y_1,\ldots,Y_p)=\sum_{\sigma\in S_p}\mathrm{sg}(\sigma)\eta(B_1Y_{\sigma_1},\ldots,B_pY_{\sigma_p}).
\end{equation}
This contraction obeys a simple Leibniz rule under covariant differentiation, with no minus signs attached as expected. In particular, if $\eta$ is a $p$-form, then $\eta\circ\wedge^p\Id=p!\,\eta$. Furthermore one verifies that for a wedge of 1-forms, $\eta_1\wedge\ldots\wedge\eta_p\circ(B_1\wedge\ldots\wedge B_p)= \sum_{\sigma\in S_p} \eta_1\circ B_{\sigma_1}\wedge\ldots\wedge\eta_p\circ B_{\sigma_p}$. Notice that for a 2-form $\eta$ and an endomorphism $B$, clearly \,$\eta\circ B\wedge B\,(X,Y)=2\eta(BX,BY)$.
\end{Rema}
Now any $G_2$ structure on a 7 dimensional Riemannian manifold is entirely determined by a non-degenerate 3-form, say $\phi$. We proceed to find our 3-form. As in \cite{AlbSal}, we define a 3-form $\alpha=U\lrcorner\,\inv{f}\vol_M\in\Omega^0_{SM}(\Lambda^3{\cal V}^*)\subset\Omega^3$, a 1-form $\mu$ by $\mu(X)=\langle U,\theta X\rangle$ and a 2-form $\beta$ by $g\circ \theta\wedge\Id$. Using the natural contraction, we define $\alpha_1,\alpha_2\in\Omega^3$ by
\begin{equation}\label{alfas} 
\alpha_1=\frac{1}{2}\alpha\circ(\theta\wedge\Id\wedge\Id), \hspace{2cm}
\alpha_2=\frac{1}{2}\alpha\circ(\theta\wedge\theta\wedge\Id)
\end{equation} 
where $\Id$ is the identity map. To understand this, notice $\alpha$ is a 3-form and hence $\alpha_1=\cyclic_{X,Y,Z}\alpha(\theta X,Y,Z)$, the cyclic sum. And $\beta(X,Y)=\langle\theta X,Y\rangle -\langle\theta Y,X\rangle=\langle\theta X^h,Y^v\rangle-\langle\theta Y^h,X^v\rangle$. Finally the associated 3-form $\phi$ of the \gwistor\ bundle of $M$ and its image under Hodge-$*$ are given respectively by
\begin{equation}\label{phieestrelaphi}
\phi=\alpha+\mu\wedge\beta-\alpha_2\qquad\mbox{and}\qquad *\phi=\vol^*_M-\frac{1}{2}\beta^2-\mu\wedge\alpha_1,
\end{equation}
where we denote $\vol_M^*=f^*\vol_M$. The following result is quite useful.
\begin{prop}[first structure equations, \cite{AlbSal}] \label{bse}
We have the following relations:
\begin{equation}\label{bse1} 
\begin{split}
*\alpha=\vol^*_M,\qquad\ \ *\alpha_1=-\mu\wedge\alpha_2,\qquad\ \ *\alpha_2=\mu\wedge\alpha_1,\hspace{1.7cm} \\
*\beta=-\frac{1}{2}\mu\wedge\beta^2,\qquad\ \  *\beta^2=-2\mu\wedge\beta,\qquad\ \ \beta^3\wedge\mu=-6\Vol_{SM}.
\end{split}
\end{equation}
Writing $\alpha_0=\alpha$ we have also
\begin{equation}\label{bse2} 
\begin{split}
\alpha_1\wedge\alpha_2=3*\mu=-\frac{1}{2}\beta^3,\qquad\ \  \beta\wedge\alpha_i=\beta\wedge*\alpha_i=\alpha_0\wedge\alpha_i=0,\ \ \forall i=0,1,2, \\
\alpha\wedge\phi=\alpha_2\wedge\phi =*\alpha_1\wedge\phi=0\qquad\qquad *\alpha\wedge\phi=\alpha\wedge*\phi=\Vol_{SM}.\ \ 
\end{split}
\end{equation}
\end{prop}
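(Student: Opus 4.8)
The plan is to reduce every one of these relations to a pointwise identity in a single adapted orthonormal coframe, in which all the forms of (\ref{alfas}) and (\ref{phieestrelaphi}) have constant coefficients. At each $u\in SM$, extend the frame of (\ref{definicaocalR}): take $e_0=\theta^tU$ together with a horizontal orthonormal completion $e_1,e_2,e_3$, and set $e_4=\theta e_1,\ e_5=\theta e_2,\ e_6=\theta e_3$, which by isometry of $\theta$ form an orthonormal basis of the vertical space ${\cal V}_u=u^\perp$. Writing $e^0,\dots,e^6$ for the dual coframe and $e^{i_1\cdots i_k}$ for $e^{i_1}\wedge\cdots\wedge e^{i_k}$, the first step is to read off the basic forms. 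Since $\theta e_0=\theta\theta^tU=U$ is the unit normal to $SM$ inside $f^{-1}TM$ and $\theta$ annihilates ${\cal V}$, one gets immediately $\mu=e^0$ and $\beta=e^{14}+e^{25}+e^{36}$; fixing orientations so that $\alpha=e^{456}$ then forces $\vol^*_M=e^{0123}$ and $\Vol_{SM}=e^{0123456}$.

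Next I would evaluate $\alpha_1=\cyclic_{X,Y,Z}\alpha(\theta X,Y,Z)$ and $\alpha_2=\tfrac12\,\alpha\circ(\theta\wedge\theta\wedge\Id)$ on the coframe, using $\theta e_i=e_{i+3}$ for $i=1,2,3$, $\theta|_{\cal V}=0$, and $\alpha(U,\cdot,\cdot)=0$ (as $\alpha$ is vertical and $U\perp TSM$). This produces $\alpha_1=e^{156}-e^{246}+e^{345}$ and $\alpha_2=e^{126}-e^{135}+e^{234}$, and hence the explicit $\phi$ and $*\phi$ of (\ref{phieestrelaphi}). The conceptual payoff is that, these coefficients being the same universal constants at every point, each asserted relation becomes a fixed statement about $\Lambda^\bullet(\R^7)^*$ with its standard $G_2$ $3$-form, so it is enough to check each one once.

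With the forms in hand the verifications are mechanical: apply $*(e^{i_1\cdots i_k})=\pm\,e^{j_1\cdots j_{7-k}}$ (complementary indices, sign the parity of the permutation) in the orientation $\Vol_{SM}=e^{0123456}$. Then $*\alpha=*e^{456}=e^{0123}=\vol^*_M$ is immediate; computing $*\alpha_1,*\alpha_2,*\beta$ termwise and matching them against $-\mu\wedge\alpha_2,\ \mu\wedge\alpha_1,\ -\tfrac12\mu\wedge\beta^2$ dispatches the first row, for which one needs $\beta^2=-2(e^{1245}+e^{1346}+e^{2356})$. The identities $*\beta^2=-2\mu\wedge\beta$, $\beta^3=6\,e^{14}\wedge e^{25}\wedge e^{36}=-6\,e^{123456}$ and $\alpha_1\wedge\alpha_2=3\,e^{123456}$ then give $\beta^3\wedge\mu=-6\Vol_{SM}$ and $\alpha_1\wedge\alpha_2=3*\mu=-\tfrac12\beta^3$. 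Every vanishing in (\ref{bse2})---the relations $\beta\wedge\alpha_i=\beta\wedge*\alpha_i=\alpha_0\wedge\alpha_i=0$ and $\alpha\wedge\phi=\alpha_2\wedge\phi=*\alpha_1\wedge\phi=0$---holds because each monomial in the product repeats a coframe index (equivalently, no monomial has the complementary index set), while $*\alpha\wedge\phi=\alpha\wedge*\phi=\Vol_{SM}$ isolates the single surviving term $e^{0123}\wedge e^{456}$.

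I expect the only genuine obstacle to be sign and orientation bookkeeping: one must fix the orientation of $SM$ and the induced orientation of $f^{-1}TM$ compatibly, so that $*\alpha=+\vol^*_M$ and $\beta^3\wedge\mu=-6\Vol_{SM}$ emerge with their correct relative sign, and one must track carefully the reordering signs entering $\alpha_1,\alpha_2,\beta^2,\beta^3$. The one point needing conceptual care rather than mere computation is that $\theta e_0=U$ leaves $TSM$, so $\alpha$ and $\beta$ must be handled consistently as forms on $SM$ (where $U$ is invisible); once the adapted coframe is chosen this is automatic.
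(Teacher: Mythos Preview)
Your approach is exactly the one taken in the paper: fix the adapted orthonormal coframe $e^0,\dots,e^6$ with $\mu=e^0$, $\alpha=e^{456}$, $\beta=e^{14}+e^{25}+e^{36}$, compute $\alpha_1,\alpha_2$ explicitly (your expressions agree with the paper's $e^{156}+e^{264}+e^{345}$ and $e^{126}+e^{234}+e^{315}$ up to reordering of indices), and then verify each identity termwise as a statement in $\Lambda^\bullet(\R^7)^*$. There is no genuine difference in method or in the care points you flag.
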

To study these forms we use a frame. Let $e_0=\theta^tU,e_1,\ldots,e_6$ denote the direct orthonormal basis of $TSM$ induced from $e_0,\ldots,e_3$, a direct orthonormal basis of $\hnab$, as in (\ref{definicaocalR}). It is easy to prove the existence of such frames: we fix a direct o.n. frame $f_1,\ldots,f_4$ on an open set in $M$ and then take Cartesian coordinates $(u_1,\ldots,u_4)\in S^3$ to write $e_0=u=\sum u_if_i$. Then $e_1,e_2,e_3$ follow by a well known transformation in $Sp(1)$ of the $u_i$, cf. section \ref{ttgs}. The mirror map $\theta$ gives $e_4,e_5,e_6$. Now, by definition, $\alpha=e^{456}$ and $\mu=e^0$. Thus $*\alpha=e^{0123}=\vol^*_M$. It is also trivial to see $\beta=e^{14}+e^{25}+e^{36}$. From direct inspection on $\alpha$ composed with $\theta$ we find
\begin{equation}\label{alphascoef}
  \alpha_1=e^{156}+e^{264}+e^{345}\qquad\ \ \ \mbox{and}\qquad\ \ \ \alpha_2=e^{126}+e^{234}+e^{315} .
\end{equation}
Hence $*\alpha_1=-e^{0234}+e^{0135}-e^{0126}=-\mu\wedge\alpha_2$ and $*\alpha_2=e^{0345}+e^{0156}+e^{0264}=\mu\wedge\alpha_1$. Now $\beta^3=(2e^{1425}+2e^{1436}+2e^{2536})\wedge\beta=6e^{142536}=-6*\mu$.
Finally, $*\beta=e^{02356}+e^{01346}+e^{01245}=-\frac{1}{2}\mu\wedge\beta^2$ and  $*\beta^2=-2*(e^{1245}+e^{1346}+e^{2356})=-2(e^{036}+e^{025}+e^{014})=-2\mu\wedge\beta$. The other relations in (\ref{bse1}) and (\ref{bse2}) follow as easily. Finally
\begin{equation}
 |\phi|^2=*(\phi\wedge*\phi)=*(1+\frac{6}{2}+\frac{6}{2})\Vol_{SM}=7 .
\end{equation}
We shall see an example after a final remark on the structure.
\begin{Rema}
There are plenty of tensors in \gwistor\ space which account for the richness of the unit tangent sphere bundle, but a few are not needed to describe $\phi,*\phi$ or its derivatives. Nevertheless, for the sake of completeness, we will do the systematic study and computation of their derivatives. The first is the tensor $\theta^tU=\mu^\sharp$, a canonical vector field, and the second is
\begin{equation}\label{alpha3}
 \alpha_3=\frac{1}{6}\alpha\circ(\theta\wedge\theta\wedge\theta)=\theta^tU\lrcorner f^*\vol_M.
\end{equation}
In the earlier frame, we get $\alpha_3=e^{123}$. Also notice $TSM$ is isomorphic to the direct sum of two vector bundles of rank 3 and a line bundle.
\end{Rema}

\subsection{The trivial \gwistor\ space}
\label{ttgs}

Before going further it is interesting to observe the case of $\R^4$ with canonical metric and trivial connection $\dx$. Here $SM=\R^4\times S^3\subset\R^8$, in which we use coordinates $(x,u)=(x_1,\ldots,x_4,u_1,\ldots,u_4)$. Of course the $\papa{ }{x_i}$ are orthonormal and horizontal, hence the previously announced co-frame $e^0,e^1,\ldots,e^6$ may be given by the identities $e^0=\mu=\sum u_i\dx x_i$ and
\begin{equation}\label{coframe}
 \left[\begin{array}{cc}e^1&e^4\\e^2&e^5\\e^3&e^6\end{array}\right]
=\left[\begin{array}{cccc}-u_2& u_1 & -u_4 & u_3\\ 
-u_3& u_4 & u_1 & -u_2\\ -u_4& -u_3 & u_2 & u_1\end{array}\right]
\left[\begin{array}{cc}\dx x_1&\dx u_1\\ \dx x_2&\dx u_2\\ \dx x_3&\dx u_3\\ \dx x_4&\dx u_4\end{array}\right].
\end{equation}
A simple but quite long computation yields:
\begin{eqnarray*}
 \alpha&=&u_1\dx u_{234}-u_2\dx u_{134}+u_3\dx u_{124}-u_4\dx u_{123}\\
\mu\wedge\beta&=& u_1(\xi_{12,2}+\xi_{13,3}+\xi_{14,4})+u_2(\xi_{21,1}+\xi_{23,3}+\xi_{24,4})\\
& &\ \ \ +u_3(\xi_{31,1}+\xi_{32,2}+\xi_{34,4})+u_4(\xi_{41,1}+\xi_{42,2}+\xi_{43,3})\\
\alpha_2&=&u_1(\xi_{23,4}-\xi_{24,3}+\xi_{34,2})-u_2(\xi_{13,4}-\xi_{14,3}+\xi_{34,1})\\
& &\ \ \ +u_3(\xi_{12,4}+\xi_{24,1}-\xi_{14,2})-u_4(\xi_{12,3}-\xi_{13,2}+\xi_{23,1})\\
\alpha_1&=&u_1(\xi_{2,34}-\xi_{3,24}+\xi_{4,23})-u_2(\xi_{1,34}-\xi_{3,14}+\xi_{4,13})\\
& &\ \ \ +u_3(\xi_{1,24}-\xi_{2,14}+\xi_{4,12})-u_4(\xi_{1,23}-\xi_{2,13}+\xi_{3,12})
\end{eqnarray*}
where $\dx u_{ijk}=\dx u_i\wedge\dx u_j\wedge\dx u_k$ and $\xi_{ij,k}=\dx x_i\wedge\dx x_j\wedge \dx u_k$ and $\xi_{i,jk}=\dx x_i\wedge\dx u_j\wedge \dx u_k$. Then we find
\begin{equation}\label{flatcase}
 \dx\alpha=0,\qquad \dx\mu=-\beta,\qquad \dx\alpha_2=2\mu\alpha_1,\qquad \dx\alpha_1=3\mu\alpha. 
\end{equation}
Notice these results are valid subject to the condition $u_1^2+\cdots+u_4^2=1$. Their purpose is to explicitly describe the 3-form $\phi=\alpha+\mu\wedge\beta-\alpha_2$, which has $\dx\phi=-\beta^2-2\mu\alpha_1$ and $\dx*\phi=0$. The equations (\ref{flatcase}) deduced in coordinates for the flat case partly confirm our algebraic results of proposition (\ref{derivacoesexteriores}).

\subsection{Computing $\dx\phi$ and $\dx*\phi$}
\label{computing}

An orientable Riemannian 7-manifold with a $G_2$ structure $\phi$ admits a holonomy reduction to $G_2$ if, and only if, $\phi$ is parallel. Such condition being fulfilled gives rise to the concept of a $G_2$ \textit{manifold}. A theorem of A. Gray says this is equivalent to having $\phi$ harmonic. If $\dx\phi=0$, then the structure is called \textit{calibrated} and, if $\delta\phi=0$, the structure is called \textit{co-calibrated}, cf. \cite{Agri,Bryant2,FerGray,FriKaMoSe}.

We shall classify our \gwistor\ space and hence must compute several derivatives.
\begin{prop}\label{derivacoes}
For any vector field $X$ over $SM$:
\begin{meuenumerate}
\item $\nag_X\alpha=\frac{1}{4}\alpha\circ(\calR(X,\cdot)\wedge\Id\wedge\Id)= A_X\alpha$.
\item $\nag_X\alpha_1=\tfrac{1}{2}(X^v\lrcorner\inv{f}\vol_M)\circ\theta\wedge\Id\wedge\Id+ \tfrac{1}{2}\alpha\circ(\calR(X,\cdot)\wedge\theta\wedge\Id) -\tfrac{1}{2}\alpha\circ\,\theta (A_X+\tau_X)\wedge\Id\wedge\Id$.
\item $\nag_X\alpha_2=\tfrac{1}{2}(X^v\lrcorner\inv{f}\vol_M)\circ\theta\wedge\theta\wedge\Id+ \tfrac{1}{4}\alpha\circ(\calR(X,\cdot)\wedge\theta\wedge\theta)
- \alpha\circ\,\theta(A_X+\tau_X)\wedge\theta\wedge\Id$.
\item $\nag_X\alpha_3=\tfrac{1}{6}(X^v\lrcorner\inv{f}\vol_M)\circ\theta^3
-\tfrac{1}{2}\alpha\circ\,\theta (A_X+\tau_X)\wedge\theta^2$.
\item $\nag_X\mu=X^\flat\circ\theta +\mu\circ (A_X+\tau_X)$.
\item $\nag_X\beta=\beta\circ(\tfrac{1}{2}\calR(X,\cdot)-A_X-\tau_X\ \wedge\Id)$.
\item $\nag_X\theta^tU=\theta^tX^v-\tfrac{1}{2}\calR(X,\theta^tU)+(A+\tau)(X,\theta^tU).$
\end{meuenumerate}
\end{prop}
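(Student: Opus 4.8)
The plan is to treat all seven items uniformly, exploiting that every object in the list is assembled by the contraction $\circ$ of (\ref{esttensorcont}) from three building blocks which behave simply under the pull-back connection $D^*$: the fibre volume $\inv{f}\vol_M$ and the mirror map $\theta$ are both $D^*$-parallel (the former because $D$ is an $\sol(4)$-connection, the latter by the very definition of $\theta$ in (\ref{teta})), while the vertical section satisfies $D^*_XU=X^v$, as follows from (\ref{correcterm}). Writing $\nag=\Dstar+S$ with $S(X,Y)=-\tfrac12\calR(X,Y)+A(X,Y)+\tau(X,Y)$ from (\ref{lcTM}), and recalling that $\Dstar$ differs from $D^*$ only by a term valued in $\R U$, the whole computation reduces to the following scheme: differentiate the building blocks with $D^*$ (only $U$ contributes, giving $X^v$), then correct each tensor slot by the endomorphism $S_X$ through the Leibniz rule for $\circ$ announced in the Remark preceding (\ref{bse}).

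I would begin with item (7), the only genuine vector field, since it is nothing but (\ref{lcTM}) read at $Y=\theta^tU$: the $D^*$-parallelism of $\theta$ together with $D^*_XU=X^v$ gives $\Dstar_X\theta^tU=\theta^tX^v$, and the remaining three summands are $S(X,\theta^tU)$ verbatim. Item (5) then follows by taking the musical dual, since $\mu=(\theta^tU)^\flat$ and $\nag$ is metric; here one rewrites the $A$- and $\calR$-contributions by means of the defining identity (\ref{AlcTM}) and of the fact that $\calR$ and $\tau$ vanish as soon as one entry is vertical, regrouping everything into $X^\flat\circ\theta+\mu\circ(A_X+\tau_X)$. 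For item (6) I would write $\beta=g\circ(\theta\wedge\Id)$ and, since $g$ is $\nag$-parallel while $\Id$ and the $D^*$-part of $\theta$ are parallel, push the derivative onto $\theta$: the tensor $\nag_X\theta$ measures the non-commutation of $\theta$ with $S_X$, which produces the factor $\tfrac12\calR(X,\cdot)-A_X-\tau_X$ in the first slot.

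The forms $\alpha,\alpha_1,\alpha_2,\alpha_3$ of items (1)--(4) I would handle directly from their defining contractions $\alpha_i=\tfrac{1}{c_i}(U\lrcorner\inv{f}\vol_M)\circ(\theta^{\wedge i}\wedge\Id^{\wedge(3-i)})$, rather than by feeding item (1) into the others. Applying the Leibniz rule, the derivative splits into an ``internal'' piece coming from $D^*_XU=X^v$, namely $(X^v\lrcorner\inv{f}\vol_M)\circ(\theta^{\wedge i}\wedge\cdots)$, and the $S_X$-corrections distributed over the slots. In each correction the $A$- and $\tau$-parts are horizontal-valued and the $\calR$-part is vertical-valued, so composition with the $\theta$'s and with $\Id$ selects exactly which survive against the vertical form $\alpha$; this is what isolates the pure $\calR$-contraction in item (1) and the $\theta(A_X+\tau_X)$-contraction in items (2)--(4). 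The clean form of item (1) in particular rests on a dimension count: its internal term $X^v\lrcorner\inv{f}\vol_M$ is evaluated on three vectors of ${\cal V}$, and since $\dim{\cal V}=3$ while $\inv{f}\vol_M$ has degree $4$, that term vanishes identically --- whereas for $i\geq1$ the intervening $\theta$'s reintroduce the $\R U$ and horizontal directions and the internal term persists, which explains its reappearance in (2)--(4).

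I expect the main obstacle to be bookkeeping rather than any single conceptual point. One must track, term by term, the mutually orthogonal vertical, horizontal and $\R U$ components, repeatedly invoke the vanishing properties of $\calR,A,\tau$ and the defining relations (\ref{AlcTM})--(\ref{AlcTMhvsv}) in order to trade $\calR$ against $A$, and keep straight the combinatorial factors produced by the permutation sum in (\ref{esttensorcont}) and by the Leibniz rule (these are the source of the $\tfrac14,\tfrac12,\tfrac16$ that appear throughout). As the author remarks for the analogous Levi-Civita verification, each individual manipulation is routine; the difficulty lies purely in the volume of terms and in never confusing a vertical $\calR$-value with a horizontal $A$- or $\tau$-value when composing with $\theta$.
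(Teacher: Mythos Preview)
Your proposal is correct and follows essentially the same route as the paper: both arguments rest on writing $\nag=\Dstar+S$, the $D^*$-parallelism of $\theta$ and $\inv{f}\vol_M$, the identity $D^*_XU=X^v$, the Leibniz rule for the contraction $\circ$, and the dimension count that kills the internal term in item~(1). The only organisational differences are that the paper computes item~(5) directly rather than by dualising (7), and that for items (2)--(4) the paper feeds the already-proved formula for $\nag_X\alpha$ back in via $\tilde{\nabla}^g_X\theta=[A_X+\tau_X,\theta]$ instead of redoing each $\alpha_i$ from scratch; neither choice changes the substance of the argument.
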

\begin{proof}
1. We have $\nag_XY_i=\Dstar_XY_i-\frac{1}{2}\calR(X,Y_i)+A_XY_i+\tau_XY_i$ for
any three vector fields $Y_1,Y_2,Y_3$ on $SM$ and it is easy to see $\Dstar\alpha=D^*\alpha$, hence
\begin{eqnarray*}
\lefteqn{\nag_X\alpha(Y_1,Y_2,Y_3) \ =\ \Dstar_X(U\lrcorner\,\inv{f}\vol_M)(Y_1,Y_2,Y_3)\,+}  \\
        & & +\frac{1}{2}\bigl(\alpha(\calR(X,Y_1),Y_2,Y_3) +\alpha(Y_1,\calR(X,Y_2),Y_3)+\alpha(Y_1,Y_2,\calR(X,Y_3))\bigr)\\
        &=&  (D^*_X\inv{f}\vol_M)(U,Y_1,Y_2,Y_3)+\inv{f}\vol_M(D^*_XU,Y_1,Y_2,Y_3)+  \\
        & & +\frac{1}{2}\bigl(\alpha(\calR(X,Y_1),Y_2,Y_3) +\alpha(\calR(X,Y_2),Y_3,Y_1) +\alpha(\calR(X,Y_3),Y_1,Y_2)\bigr).\qquad\ (*)
\end{eqnarray*}
The first term on the sum vanishes because $D\vol_M=0$; the second is $X^v\lrcorner\inv{f}\vol_M$ because $D^*_XU=X^v$. Since the $X^v$ and $Y_i^v$ are linearly dependent, this part also vanishes. For the third term, notice
\[ e^{j}(\calR(X,\cdot))=\langle e_{j},R^*(X,\cdot)U\rangle=2\langle A_Xe_{j},\cdot\rangle,  \]
for $4\leq j\leq6$, and so, if we see $\alpha=e^{456}$ as previously, then 
\[ \frac{1}{4}\alpha\circ(\calR(X,\cdot)\wedge\Id\wedge\Id)=
(A_Xe_4)^\flat\wedge e^{56}-(A_Xe_5)^\flat\wedge e^{46}+(A_Xe_6)^\flat\wedge e^{45}, \]
ie. $A_X$ acts as a derivation of $\alpha$. \\
2. Let $\tilde{\na}^g=\nag+\langle D^*\ ,U\rangle U$, ie. the Levi-Civita connection of $TM$. Since $\alpha$ and $\theta$ vanish when we take one direction proportional to $U$, the derivative of $\alpha_1$ we have to compute can be made with $\tilde{\na}^g$. Since $\tilde{\na}^g_X\theta=[A_X+\tau_X,\theta]$, the Leibniz rule yields
\begin{equation*}
 \begin{split}
 \nag_X(\tfrac{1}{2}\alpha\circ\theta\wedge\Id\wedge\Id) \ =\  \tfrac{1}{2}\nag_X\alpha\circ\,\theta\wedge\Id\wedge\Id +  \tfrac{1}{2}\alpha\circ\,\tilde{\na}^g_X\theta\wedge\Id\wedge\Id \\
=\ \tfrac{1}{2}(X^v\lrcorner\inv{f}\vol_M+A_X\alpha)\circ\theta\wedge\Id\wedge\Id -\tfrac{1}{2}\alpha\circ\,\theta (A_X+\tau_X)\wedge\Id\wedge\Id
 \end{split}
\end{equation*}
also because $A,\tau$ are horizontal valued. Taking computation in 1. into account, it is easy to see that three of the nine summands in 
\[ \tfrac{1}{2}(A_X\alpha)\circ\theta\wedge\Id\wedge\Id\,(Y_1,Y_2,Y_3)\:=\: \cyclic_{Y_1,Y_2,Y_3}A_X\alpha\:(\theta Y_1,Y_2,Y_3)\]
are 0 because $\calR(X,\theta Y_i)=0$. And we are left with the six summands of $\frac{1}{2}\alpha\circ(\calR(X,\ )\wedge\theta\wedge\Id)$, cf. (*). In sum, we found
\begin{equation*}
\begin{split}
\nag_X\alpha_1=\tfrac{1}{2}(X^v\lrcorner\inv{f}\vol_M)\circ\theta\wedge\Id\wedge\Id+\hspace{1.5cm}\\ +\tfrac{1}{2}\alpha\circ(\calR(X,\ )\wedge\theta\wedge\Id)- \tfrac{1}{2}\alpha\circ\,\theta (A_X+\tau_X)\wedge\Id\wedge\Id. 
\end{split}
\end{equation*}
3. Proceeding as above,
\begin{equation*}
 \begin{split}
 \nag_X(\tfrac{1}{2}\alpha\circ\theta\wedge\theta\wedge\Id) \ =\  \tfrac{1}{2}\nag_X\alpha\circ\,\theta\wedge\theta\wedge\Id +  \alpha\circ\,\tilde{\na}^g_X\theta\wedge\theta\wedge\Id \\
=\ \tfrac{1}{2}(X^v\lrcorner\inv{f}\vol_M+A_X\alpha)\circ\theta\wedge\theta\wedge\Id -\alpha\circ\,\theta (A_X+\tau_X)\wedge\theta\wedge\Id.
 \end{split}
\end{equation*}
We now have six null summands in nine, leaving us with
\[ \tfrac{1}{2}(A_X\alpha)\circ\theta\wedge\theta\wedge\Id= 
\tfrac{1}{4}\alpha\circ(\calR(X,\ )\wedge\theta\wedge\theta) . \]
4. Proceeding as before,
\begin{equation*}
 \begin{split}
 \nag_X\tfrac{1}{6}\alpha\circ\theta^3 \ =\  \tfrac{1}{6}\nag_X\alpha\circ\,\theta^3 +  \tfrac{1}{2}\alpha\circ\,\tilde{\na}^g_X\theta\wedge\theta^2 \hspace{2cm}\\
=\ \tfrac{1}{6}(X^v\lrcorner\inv{f}\vol_M+A_X\alpha)\circ\theta^3-\tfrac{1}{2}\alpha\circ\,\theta (A_X+\tau_X)\wedge\theta^2\\
=\ \tfrac{1}{6}X^v\lrcorner\inv{f}\vol_M\circ\theta^3
-\tfrac{1}{2}\alpha\circ\,\theta (A_X+\tau_X)\wedge\theta^2.\ \ 
 \end{split}
\end{equation*}
5. A straightforward computation:
\begin{eqnarray*}
  \nag_X\mu(Y)&=& X(\mu Y)-\mu(\nag_XY)\\
&=&\langle D^*_XU,\theta Y\rangle +\langle U,D^*_X(\theta Y)\rangle-\langle U,\theta(D^*_XY)\rangle-\langle U,\theta(A_XY+\tau_XY)\rangle\\
&=&  \langle X,\theta Y\rangle-\mu((A+\tau)_XY).
\end{eqnarray*}
6. This is the simple consequence of $\Dstar\beta=D^*\beta=0$. Computation through $\nag\beta=g\circ\nag\theta\wedge\Id$ is equal in length.\\
7. From $\Dstar_X\theta^tU=\theta^tD^*_XU=\theta^tX^v$, we conclude the result.
 \end{proof}
Recall the Ricci tensor of $M$ is defined by $r(X,Y)=\Tr{R^D(\cdot,X)Y}$. It determines an endomorphism $\ric\in\Omega^0(\End{TM})$ satisfying $r(X,Y)=\langle X,\ric Y\rangle,\ \,\forall X,Y\in TM$. Recall $r$ is symmetric if $T^D=0$, ie. in the case of $D$ being the Levi-Civita connection of $M$.

On $SM$ we shall denote by $\underline{r}$ the function $r(U,U)$ and set $\rho=(\ric U)^\flat\in\Omega^0({\cal V}^*)$. It is a 1-form on $SM$, vanishing on $\hnab$ and restricted to vertical tangent directions. One may view $\rho$ as the vertical lift of $r(\ ,U)$. Consider the usual frame, used in the proof of proposition \ref{bse}, and let $R^*_{ijkl}=\langle R(e_i,e_j)e_k,e_l\rangle$. Then
\begin{equation}\label{roUric}
 \rho=\sum_{i,k=1}^3R_{ki0k}e^{i+3}\qquad\mbox{and}\qquad\underline{r}=\sum_jR_{j00j}.
\end{equation}
In the following we abbreviate $T^*,R^*$ for $T,R$ respectively; these are both totally horizontal tensors. Also let $\bx$ denote the operator of cyclic sum on 3-tensors. On \gwistor\ space we also define the following scalar functions
\begin{equation}\label{deflem}
 l=R_{1230}+R_{2310}+R_{3120}\qquad\quad \mbox{and}\qquad\quad  m=\Tr{T(e_0,\ )}
\end{equation}
and 3-forms
\begin{equation}\label{defvarrhoegama}
 \varrho=\bx\,\mu(R(\ ,\ )\ )\qquad\quad \mbox{and}\qquad\quad \sigma=\bx\,\beta(T(\ ,\ ),\ ).
\end{equation}
Of course all these last four, $l,m,\varrho,\sigma$, vanish in the torsion free context. Notice $l=\varrho_{123}$ does not depend on the choice of the orthonormal frame $e_1,e_2,e_3$ which together with $e_0=u$ is positively oriented. The next result includes some new definitions.
\begin{prop}\label{derivacoesexteriores}
We have the following formulae:
\begin{meuenumerate}
\item $\dx\alpha=\calR\alpha$.
\item $\dx\alpha_1=3\mu\alpha+\calR\alpha_1+T\alpha_1$\, and \,$\mu\wedge\calR\alpha_1=-\vol^*_M\wedge\rho$.
\item $\dx\alpha_2=2\mu\alpha_1-\underline{r}\,\vol^*_M+T\alpha_2$.
\item $\dx\alpha_3=\mu\alpha_2+m\vol^*_M.$
\item $\dx\,\vol_M^*=0$.
\item $\dx\mu=-\beta+\mu(T)$ and \,$\delta\mu=-m$.
\item $\dx\beta=\dx(\mu T)=\varrho+\sigma$.
\item If $D$ is torsion free, then $\dx\beta=0,\ \,\dx\mu=-\beta$\, and \,$\delta\mu=0$.
\end{meuenumerate}
\end{prop}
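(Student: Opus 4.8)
The unifying device is the elementary fact that, since the Levi-Civita connection $\nag$ of $SM$ is torsion free, the exterior derivative of any $p$-form $\omega$ is the alternation of its covariant derivative,
\[ \dx\omega(X_0,\ldots,X_p)=\sum_{i=0}^p(-1)^i(\nag_{X_i}\omega)(X_0,\ldots,\widehat{X_i},\ldots,X_p). \]
The plan is therefore to feed in, one line at a time, the covariant derivatives already computed in Proposition \ref{derivacoes} and to antisymmetrize. Item 5 is immediate and needs no such machinery: $\vol_M^*=f^*\vol_M$ with $\vol_M$ closed (it is top degree on $M$), so $\dx\vol_M^*=f^*\dx\vol_M=0$.

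The antisymmetrization sorts the summands of each $\nag_X(\cdot)$ into four kinds, which I would treat uniformly. The purely vertical pieces $X^v\lrcorner\inv{f}\vol_M$ composed with powers of $\theta$ produce the leading terms $3\mu\alpha$, $2\mu\alpha_1$, $\mu\alpha_2$ of items 2--4 and the $-\beta$ of item 6 (using $\langle X,\theta Y\rangle-\langle Y,\theta X\rangle=-\beta(X,Y)$ read off the frame). Every insertion of the $A$-tensor dies, because by (\ref{AlcTM}) the value $A(X,Y)$ is symmetric in $X,Y$, so the expressions $\alpha(\theta A(X,\cdot)\cdots)$ and $\mu(A(X,\cdot))$ are symmetric in two of the slots being antisymmetrized. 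The curvature insertions $\calR(X,\cdot)=R^*(X,\cdot)U$ are skew in their two arguments, hence survive and reassemble into $\calR\alpha_i$ and $\varrho=\bx\,\mu(R(\ ,\ )\ )$. Finally the torsion insertions $\tau_X$ survive and, through the identity $\tau(X,Y)-\tau(Y,X)=-T^*(X,Y)$ coming from (\ref{taulcTM}), build up $T\alpha_i$, $\mu(T)$ and $\sigma=\bx\,\beta(T(\ ,\ ),\ )$.

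What remains, and is the real obstacle, is to show that certain curvature assemblies collapse to traces. In item 3 the two-$\theta$ curvature term, once alternated and contracted against $f^*\vol_M$, should reduce by the first Bianchi identity for $R^D$ to the Ricci scalar $\underline{r}=r(U,U)$, yielding the term $-\underline{r}\,\vol_M^*$; item 4 produces $m\,\vol_M^*$ from $m=\Tr{T(e_0,\ )}$ in the same spirit, and the one-$\theta$ counterpart in item 2 is exactly the auxiliary identity $\mu\wedge\calR\alpha_1=-\vol^*_M\wedge\rho$, where the single free curvature contraction becomes $\rho=(\ric U)^\flat$. The codifferential $\delta\mu=-m$ I would get as the trace $-\sum_i(\nag_{e_i}\mu)(e_i)$, in which the $\theta$-diagonal terms vanish, the symmetric $A$-part drops, and only the torsion trace $m$ is left. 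The first equality $\dx\beta=\dx(\mu T)$ in item 7 is forced, with no computation, by applying $\dx^2=0$ to item 6; and the torsion-free case, item 8, follows at once since $T=0$ annihilates $\sigma$ and $\mu(T)$ while $\varrho=\bx\,\mu(R(\ ,\ )\ )=\mu(\bx R)$ vanishes by the (now uncorrected) first Bianchi identity. I expect the delicate part to be the careful bookkeeping of the curvature contractions against the mirror map $\theta$ in the frame $e_0,\ldots,e_6$, together with the correct torsion-corrected Bianchi identity that cleanly separates the genuine curvature forms $\calR\alpha_i$ from the Ricci traces $\underline{r}$ and $\rho$.
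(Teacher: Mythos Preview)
Your plan is essentially the paper's own: alternate the covariant derivatives of Proposition~\ref{derivacoes} in the adapted frame $e_0,\ldots,e_6$, let the $A$-terms die by symmetry, and reassemble the $\tau$-terms into $T$ via $\tau(X,Y)-\tau(Y,X)=-T^*(X,Y)$. One small correction: no Bianchi identity is needed for the $-\underline r\,\vol_M^*$ in item~3, since the two-$\theta$ curvature contribution is a purely horizontal $4$-form and hence automatically a multiple of $\vol_M^*$, with coefficient $R_{0101}+R_{0202}+R_{0303}=-\underline r$ read off directly; the paper also gets $\delta\mu$ via $-*\dx*\mu=\tfrac{1}{2}*(\beta^2\wedge\dx\beta)$ and computes $\dx(\mu T)$ through the identity $\dx^{D^*}T=\bx\,R$, rather than your trace formula and direct alternation of $\nag\beta$, but both routes are valid.
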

\begin{proof}
1. We continue to use the frame $e_0,\ldots,e_6$. Let $R_{ijkl}$ be as above. Now
\begin{eqnarray*}
 \dx\alpha=\sum_{i=0}^6e^i\wedge\nag_{e_i}\alpha=\frac{1}{4}\sum_{i=0}^6e^i\wedge \alpha\circ(\calR(e_i,\cdot)\wedge\Id\wedge\Id)=\hspace{1.5cm}\\
=\sum_{i=0}^6e^i\wedge\bigl((A_{e_i}e_4)^\flat\wedge e^{56}+(A_{e_i}e_5)^\flat\wedge e^{64}+(A_{e_i}e_6)^\flat\wedge e^{45}\bigr).
\end{eqnarray*}
Since $\langle A_ie_4,e_j\rangle-\langle A_je_4,e_i\rangle=\langle R^*(e_i,e_j)U,e_4\rangle=R_{ij01}$, we find
\begin{equation}\label{dalpha}
 \dx\alpha=\sum_{0\leq i<j\leq 3}R_{ij01}e^{ij56}+R_{ij02}e^{ij64}+R_{ij03}e^{ij45}\ =\ \calR\alpha.
\end{equation}
2. We start by computing
\begin{eqnarray*}
& &\tfrac{1}{2}\sum_{i=0}^6e^i\wedge (e_i^v\lrcorner\inv{f}\vol_M)\circ\theta\wedge\Id\wedge\Id\ =\ \tfrac{1}{2}e^4\wedge \inv{f}\vol_M(e_4,\theta\wedge\Id\wedge\Id)+\\
& &\quad +\tfrac{1}{2}e^5\wedge \inv{f}\vol_M(e_5,\theta\wedge\Id\wedge\Id)+\tfrac{1}{2}e^6\wedge \inv{f}\vol_M(e_6,\theta\wedge\Id\wedge\Id)\\
& & \ =\ -e^4\wedge e^{056}+e^5\wedge e^{046}-e^6\wedge e^{045}\\
& & \ =\ 3\mu\alpha.
\end{eqnarray*}
Now since $e^{j+3}\theta=e^j$ and $A$ and $\tau$ have only horizontal values, we have
\begin{eqnarray*}
& &\frac{1}{2}\sum_{i=0}^6e^i\wedge \bigl( 
(A_i\alpha)\circ\theta\wedge\Id\wedge\Id -\alpha\circ\,\theta A_i\wedge\Id\wedge\Id-\alpha\circ\,\theta\tau_i\wedge\Id\wedge\Id\bigr)\\
&=&\sum_{i=0}^3 e^i\wedge\bigl((A_{e_i}e_4)^\flat\wedge(e^{26}+e^{53})- (A_{e_i}e_5)^\flat\wedge(e^{16}+e^{43}) +(A_{e_i}e_6)^\flat\wedge(e^{15}+e^{42})\bigr)\\
& &\ \ \ -\frac{1}{2}\sum_{i,j=0}^6e^i\wedge\alpha\circ(\theta (A_{e_i}e_j)e^j\wedge\Id\wedge\Id) -\sum_{i,j=0}^3 \tau_{ij1}e^{ij56}+\tau_{ij2}e^{ij64}+\tau_{ij3}e^{ij45}.
\end{eqnarray*}
Here we have used $\tau_i=\tau_{ijk}e_ke^j$. The second sum above is a contraction of a symmetric tensor $A$ within a skew tensor, so its contribution is null. Also, using the symmetries of $\tau$, $\tau_{ijk}-\tau_{jik}=-T_{ijk}$, we find
\begin{equation}\label{dalpha1}
\begin{split}
\dx\alpha_1\ =\ \sum_{i=0}^6e^i\wedge\nag_{e_i}\alpha_1 \hspace{7cm}\\
 \ =\ 3\mu\alpha+\sum_{0\leq i<j\leq3} R_{ij01}(e^{ij26}+e^{ij53})
-R_{ij02}(e^{ij16}+e^{ij43})+\\
 +R_{ij03}(e^{ij15}+e^{ij42}) 
+\sum_{0\leq i<j\leq3}T_{ij1}e^{ij56}+T_{ij2}e^{ij64}+T_{ij3}e^{ij45}\\
\ =\  3\mu\alpha+\calR\alpha_1+T\alpha_1.\hspace{6.5cm}
\end{split}
\end{equation}
Now if we couple $\mu$ with the first sum above, then we get
\begin{equation*}
\mu\wedge\calR\alpha_1=-(R_{1301}+R_{2302})e^{01236}-(R_{1201}+R_{3203})e^{01235} -(R_{2102}+R_{3103})e^{01234}
\end{equation*}
just by using the two skew-symmetries in $R$ which do not depend of $T^D$. Since 
$\rho=\sum_{i=1}^3r(e_i,e_0)e^{i+3}$, it is immediate to conclude $\mu\wedge\calR\alpha_1=-\rho\wedge\vol^*_M$.\\
3. In this case we do as above a previous computation for the part with $\frac{1}{2}\inv{f}\vol_M$. It is the same expression but with two $\theta$ instead of two $\Id$ and thus $e_0$ may enter twice. So we get the non-vanishing terms
\begin{equation*}
 \begin{split}
  -e^{4026}+e^{4035}-e^{5034}+e^{5016}-e^{6015}+e^{6024} \\
  =\ 2e^0(e^{264}+e^{345}+e^{156})\ =\ 2\mu\alpha_1.\hspace{1.8cm}
 \end{split}
\end{equation*}
Hence
\begin{eqnarray*}
\dx\alpha_2 &=& \sum_{i=0}^6e^i\wedge \tfrac{1}{2}(e_i^v\lrcorner\inv{f}\vol_M)\circ\theta\wedge\theta\wedge\Id+ \\
& &+e^i\wedge \bigl( \tfrac{1}{2}(A_i\alpha)\circ\theta\wedge\theta\wedge\Id -\alpha\circ\,\theta A_i\wedge\theta\wedge\Id-\alpha\circ\,\theta\tau_i\wedge\theta\wedge\Id\bigr).
\end{eqnarray*}
And after careful inspection
\begin{equation}\label{dalpha2}
 \begin{split}
\dx\alpha_2 \ =\ 2\mu\alpha_1+\sum_{0\leq i<j\leq3}R_{ij01}e^{ij23}+R_{ij02}e^{ij31}+R_{ij03}e^{ij12} \hspace{2cm}\\
 +\sum_{0\leq i<j\leq3}T_{ij1}(e^{ij26}+e^{ij53}) -T_{ij2}(e^{ij16}+e^{ij43})+T_{ij3}(e^{ij15}+e^{ij42})\\
=\ 2\mu\alpha_1-\underline{r}\vol^*_M+T\alpha_2\hspace{3cm}
\end{split}
\end{equation}
since it is easy to see the expression with $R$ simplifies to $-r(U,U)\vol^*_M$.\\
4. We start by noticing that
\[ \tfrac{1}{6}(e^4\lrcorner\inv{f}\vol_M)\circ\theta^3= \tfrac{1}{6}e^4(-e^{023}+e^{032}+e^{203}-e^{230}+e^{320}-e^{302})=\mu e^{234}.\]
Then adding terms with $e_5$ and $e_6$ in place of $e_4$, gives $\mu\alpha_2$. Because of the symmetry of $A$ we are left with
\begin{equation}\label{dalpha3}
 \begin{split}
\dx\alpha_3 \ =\ \mu\alpha_2-\tfrac{1}{2} \sum_{i=0}^6e^i\wedge\alpha\circ(\theta\tau_i\wedge\theta\wedge\theta) \hspace{3cm}\\
 =\ \mu\alpha_2+ \tfrac{1}{2}\sum_{i<j\leq3}e^{ij}\bigl(T_{ij1}(e^{23}-e^{32})+T_{ij2}(e^{31}-e^{13}) +T_{ij3}(e^{12}-e^{21})\bigr)\\
=\ \mu\alpha_2+(T_{011}e^{0123}+T_{022}e^{0231}+T_{033}e^{0312})=\mu\alpha_2+m\vol^*_M.\ \ 
\end{split}
\end{equation}
5. Indeed, $\dx\,\vol_M^*=\dx f^*\vol_M=0$.\\
6. Since $A$ is symmetric and $\tau_{ijk}-\tau_{jik}=-T_{ijk}$, we get
\begin{eqnarray*}
 \dx\mu(X,Y)&=&(\nag_X\mu)Y-(\nag_Y\mu)X  \\
&=&\langle X,\theta Y\rangle-\mu(\tau_XY)-\langle Y,\theta X\rangle+\mu(\tau_YX)\\
&=&-\beta(X,Y)+\mu(T(X,Y)).
\end{eqnarray*}
Furthermore, $\delta\mu=-*\dx *\mu=\frac{1}{6}*\dx\beta^3=\frac{1}{2}*\beta^2\dx\beta$. That this is $-m$ follows next.\\
7. It is well known that for any connection $D$ we may extend the de Rham operator $\dx$ to the respective vector bundle. Also it is well known that $\dx^D\Id=T^D$ and $\dx^DT^D=\bx R^D$, the first Bianchi identity. Recall $\bx$ denotes the operator of cyclic sum on 3-tensors. In our case it is easy to see that $\dx^{\Dstar}T=\dx^{D^*}T=\bx R$. A straightforward proof would be through a commuting rule $\dx^{D^*}f^*=f^*\dx^D$. Hence
\begin{eqnarray*}
 \dx(\mu(T))(X,Y,Z)&=& \bx\bigl( X(\mu(T(Y,Z)))-\mu(T([X,Y],Z))\bigr) \\
&=& \mu(\dx^{\Dstar}T(X,Y,Z))+\bx \bigl(X(\mu(T(Y,Z)))-\mu(\Dstar_X(T(Y,Z)))\bigr)\\
&=& \mu(\bx\,R(X,Y,Z))+\bx\:(D^*_X\mu)(T(Y,Z))\\
&=& \bx\bigl( \mu(R(X,Y)Z)+\langle X,\theta T(Y,Z)\rangle\bigr)\\
&=& \bx\bigl( \mu(R(X,Y)Z)+\beta(T(Y,Z),X)\bigr)
\end{eqnarray*}
since $T$ is horizontal; finally we use $\dx\beta=\dx(\mu T)$. Notice $R$ is $\hnab$-valued. On the usual frame,
\begin{equation}\label{dbeta}
 \dx\beta=
\sum_{0\leq i<j<k\leq3}\bx R_{ijk0}\,e^{ijk}+\sum_{0\leq i<j\leq3}T_{ij1}e^{ij4}+T_{ij2}e^{ij5}+T_{ij3}e^{ij6}
=\varrho+\sigma.
\end{equation}
Since $\beta^2=-2(e^{1245}+e^{1346}+e^{2356})$ we get $*\beta^2\dx\beta=-2(T_{011}+T_{022}+T_{033})=-2m$.\\
8. This is immediate.
\end{proof}
Notice formulas (\ref{dalpha}), (\ref{dalpha1}) and (\ref{dalpha2}) are defining expressions of $\calR\alpha,\ \calR\alpha_1,\ T\alpha_1$ and $T\alpha_2$. Furthermore, (\ref{defvarrhoegama}) is rewritten in (\ref{dbeta}). From now on we drop the symbol $\wedge$, for brevity. Also we let $\vol=\vol^*_M$.
\begin{prop}
The $G_2$ structure forms $\phi=\alpha+\mu\beta-\alpha_2$ and $*\phi=\vol-\frac{1}{2}\beta^2-\mu\alpha_1$ of $SM$ satisfy
\begin{equation}\label{dphiedestrelaphi1}
\dx\phi=\calR\alpha+(\underline{r}-l)\vol-\beta^2-2\mu\alpha_1+(\mu T)\beta-\mu\sigma-T\alpha_2
\end{equation}
and
\begin{equation}\label{dphiedestrelaphi2}
    \dx*\phi=-\beta\varrho-\rho\vol-\beta\sigma-(\mu T)\alpha_1+\mu(T\alpha_1).
\end{equation}
\end{prop}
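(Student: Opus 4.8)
The plan is to differentiate the two defining expressions $\phi=\alpha+\mu\beta-\alpha_2$ and $*\phi=\vol-\tfrac12\beta^2-\mu\alpha_1$ term by term, using linearity of $\dx$ together with the graded Leibniz rule, and then to substitute the eight formulae already established in Proposition \ref{derivacoesexteriores}. Since $\mu$ is a $1$-form and $\beta$ a $2$-form, the degree-dependent signs must be tracked carefully: I would use $\dx(\mu\beta)=\dx\mu\wedge\beta-\mu\wedge\dx\beta$ and $\dx(\mu\alpha_1)=\dx\mu\wedge\alpha_1-\mu\wedge\dx\alpha_1$, while $\dx(\beta^2)=2\beta\wedge\dx\beta$ because $\dx\beta\wedge\beta=\beta\wedge\dx\beta$ in this even total degree.

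For $\dx\phi$ I would write $\dx\alpha=\calR\alpha$, expand $\dx(\mu\beta)=(-\beta+\mu T)\wedge\beta-\mu\wedge(\varrho+\sigma)=-\beta^2+(\mu T)\beta-\mu\varrho-\mu\sigma$ from parts 6 and 7 of Proposition \ref{derivacoesexteriores}, and substitute $-\dx\alpha_2=-2\mu\alpha_1+\underline{r}\,\vol-T\alpha_2$ from part 3. Collecting terms almost yields the claim; the one remaining point is the identity $\mu\wedge\varrho=l\,\vol$. This follows from the frame expression (\ref{dbeta}) for $\varrho$: wedging with $\mu=e^0$ kills every summand carrying the index $0$, so only the $e^{123}$ term survives and its coefficient $\bx R_{1230}=R_{1230}+R_{2310}+R_{3120}$ equals $l$ by definition (\ref{deflem}). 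Hence $-\mu\varrho=-l\,\vol$ and the two scalar contributions merge into $(\underline{r}-l)\vol$.

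For $\dx*\phi$ I would use $\dx\vol=0$, then $-\tfrac12\dx(\beta^2)=-\beta\wedge(\varrho+\sigma)=-\beta\varrho-\beta\sigma$, and finally $-\dx(\mu\alpha_1)=-(-\beta+\mu T)\wedge\alpha_1+\mu\wedge(3\mu\alpha+\calR\alpha_1+T\alpha_1)$. Several terms drop out here: $\beta\wedge\alpha_1=0$ by the first structure equations (\ref{bse2}) and $\mu\wedge\mu\,\alpha=0$ trivially, leaving $-(\mu T)\alpha_1+\mu\calR\alpha_1+\mu(T\alpha_1)$. The decisive substitution is $\mu\wedge\calR\alpha_1=-\rho\wedge\vol$, which is exactly the second assertion of part 2 of Proposition \ref{derivacoesexteriores}; it converts $\mu\calR\alpha_1$ into $-\rho\vol$ and assembles the stated formula (\ref{dphiedestrelaphi2}).

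The computation is essentially bookkeeping, so I expect the main obstacle to be sign discipline together with correctly recognising which wedge products vanish — in particular $\beta\wedge\alpha_1=0$, $\mu\wedge\mu=0$, and the collapse of $\varrho$ and $\calR\alpha_1$ against $\mu$ into the scalar $l$ and the $1$-form $\rho$ respectively. Once these two contractions are identified with the invariants of (\ref{deflem}) and (\ref{roUric}), the remaining assembly into (\ref{dphiedestrelaphi1}) and (\ref{dphiedestrelaphi2}) is automatic.
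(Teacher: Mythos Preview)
Your proposal is correct and follows essentially the same route as the paper: term-by-term differentiation via Proposition \ref{derivacoesexteriores}, the identification $\mu\varrho=l\,\vol$, and for $\dx*\phi$ the vanishings $\beta\alpha_1=0$, $\mu\wedge\mu=0$ together with $\mu\wedge\calR\alpha_1=-\rho\vol$. Your write-up is in fact more explicit about the Leibniz-rule signs than the paper's own proof.
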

\begin{proof}
In view of proposition \ref{derivacoesexteriores}, for $\phi$ only $\mu\dx\beta=\mu\varrho+\mu\sigma$ offers some challenge. Indeed $\mu\varrho=\sum_{i<j<k}\bx\,R_{ijk0}e^{0ijk}=l\vol^*_M$. For $*\phi$ we have
\begin{equation*}
 \begin{split}
  \dx*\phi=-(\dx\beta)\beta-(\dx\mu)\alpha_1+\mu\dx\alpha_1\hspace{3cm}\\
= -(\varrho+\sigma)\beta+(\beta-\mu T)\alpha_1+\mu(3\mu\alpha+\calR\alpha_1+T\alpha_1).\\
 \end{split}
\end{equation*}
And we had seen that $\beta\alpha_1=\mu(3\mu\alpha)=0$ and $\mu\calR\alpha_1=-\rho\vol$.
\end{proof}

\section{The torsion forms}
\label{Thetorsions}

\subsection{Representation of $G_2$}
\label{ttf}

The irreducible decomposition of $\Lambda^*\R^7$ as a $G_2$-module may be seen in well known references such as \cite{Bryant2,FerGray}. Since the star operator commutes with the group product, the problem resumes to say degrees 2 and 3 (or 4 and 5). We have
\begin{equation}\label{decomp} 
\Lambda^2=\Lambda^2_{7}\oplus\Lambda^2_{14},\hspace{2cm} \Lambda^3=\Lambda^3_{1}\oplus\Lambda^3_{7}\oplus\Lambda^3_{27},
\end{equation}
where $\Lambda^2_{7}=\{\gamma\in\Lambda^2:\,\ \gamma\wedge\phi=-2*\gamma\}$, 
\,$\Lambda^2_{14}=\{\gamma\in\Lambda^2:\,\ \gamma\wedge\phi=*\gamma\}\simeq\g_2$, $\Lambda^3_1=\R\phi$, \,$\Lambda^3_7=\{*(\gamma\wedge\phi):\,\ \gamma\in\Lambda^1\}$, $\Lambda^3_{27}=\{\gamma\in\Lambda^3:\,\ \gamma\wedge\phi=\gamma\wedge*\phi=0\}$, with the indices below standing for the dimensions.

Passing to differential forms, the unique components of $\dx\phi$ and $\dx*\phi$ on the representation subspaces are called the \textit{torsion forms} of the $G_2$ structure in question. Fortunately one of them occurs in two places:
\begin{equation}\label{torsoes0}
 \dx*\phi=\tau_1\wedge*\phi+*\tau_2,   \hspace{1.7cm} \dx\phi=\tau_0*\phi+\frac{3}{4}\tau_1\wedge\phi+*\tau_3,
\end{equation} 
with $\tau_i\in\Omega^i,\ \tau_2\in\Omega^2_{14},\ \tau_3\in\Omega^3_{27}$. Hence there are in principle sixteen classes of $G_2$ structures. There is a most valuable formula in \cite{FerGray}: if we are given $\kappa_5\in\Omega^5$, then the respective $\kappa_1\in\Omega^1$ for the first decomposition in (\ref{torsoes0}) is 
\begin{equation}\label{formdek1}
 \frac{1}{3}*(*\kappa_5\wedge*\phi).
\end{equation}
$\tau_1$ is known as the \textit{Lee form}. Following the terminology of \cite{Agri,FriIva2}, a $G_2$ structure is called \textit{balanced} if $\tau_1$ vanishes, \textit{integrable} if $\tau_2=0$, \textit{co-calibrated} if $\tau_1=\tau_2=0$ and \textit{co-calibrated of pure type $W_3$} if $\tau_0=\tau_1=\tau_2=0$. If $\tau_1=\tau_2=\tau_3=0$, then the structure is called \textit{nearly parallel}. In this case $\tau_0$ is constant. Reciprocally, if $\dx\phi=\tau_0*\phi$ with $\tau_0\neq0$ constant, then the structure is nearly parallel. Of course, we speak of $G_2$ manifold or \textit{parallel} structure, if also $\tau_0=0$.

Notice the wedge on 1-forms with $\phi$ or $*\phi$ is a $G_2$-equivariant monomorphism. We are now going to find the four torsion forms $\tau_0,\tau_1,\tau_2,\tau_3$ which will classify our $G_2$ structure. We ask the reader to distinguish the meaning of the word torsion, as $\tau_i$ or $T^D$, from the context.

\subsection{Computation of torsion tensors $\tau_1$ and $\tau_2$}

We start by $\dx*\phi$. It is the sum $\dx*\phi=\dastC+\dastT$ where
\begin{equation}
 \dastC=-\beta\varrho-\rho\vol
\end{equation}
and
\begin{equation}
 \dastT=-\beta\sigma-(\mu T)\alpha_1+\mu(T\alpha_1).
\end{equation}
We remark the first part only involves $R^D$ and it is a multiple of $\vol$, whilst in the second only $T^D$ appears and each summand has exactly three indices $\leq3$ in the usual frame. Because of this nature we choose to give the torsion forms separately. Let
\begin{equation}
 \left\{\begin{array}{l}
\tilde{R}_1=R_{0230}+R_{3020}+R_{2102}+R_{3103}\\
\tilde{R}_2=-R_{0130}-R_{3010}+R_{1201}+R_{3203}\\
\tilde{R}_3=R_{0120}+R_{2010}+R_{1301}+R_{2302}
\end{array}\right. 
\end{equation}
and consider the vector field
\begin{equation}
 \tilde{R}=\tilde{R}_1e_4+\tilde{R}_2e_5+\tilde{R}_3e_6.
\end{equation}
Then $\dastC=-\tilde{R}^\flat\,\vol$, as we shall see. Hence $\tilde R$ does not depend on the frame; in other words it is a global vector field. For the torsion-free case, $T^D=0$, we have $\tilde{R}^\flat=\rho$ and $\tilde{R}_i=(\ric U)_i$. In the following let $\astil$ be the pull-back of the Hodge operator on $M$ and consider the $\astil$-anti-selfdual 2-forms $\hat\omega_1=e^{01}-e^{23},\ \hat\omega_2=e^{02}+e^{13},\ \hat\omega_3=e^{03}-e^{12}$.
\begin{prop}\label{tau1etau2}
 We have $\dastC=\tau_{1,\mathrm{curv}}\wedge*\phi+*\tau_{2,\mathrm{curv}}$ with
\begin{equation}
 \tau_{1,\mathrm{curv}}=-\frac{1}{3}\vol^\sharp\lrcorner\dastC=-\frac{1}{3}\tilde{R}^\flat
\end{equation}
and
\begin{equation}
\tau_{2,\mathrm{curv}}=-\frac{2}{3}\tilde{R}\lrcorner\alpha+\frac{1}{3}\sum_i\tilde{R}_i\hat\omega_i
= \frac{1}{3}\tilde{R}\lrcorner(\phi-3\alpha).
\end{equation}
Moreover, $\delta\phi=\tilde{R}\lrcorner\alpha$.
In particular, if $D$ is torsion free, then $SM$ is co-calibrated if, and only if, $(M,g)$ is an Einstein manifold.
\end{prop}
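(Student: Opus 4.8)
The plan is to work entirely in the adapted orthonormal coframe $e^0,\ldots,e^6$ of Proposition \ref{bse}, in which $\phi,*\phi,\alpha,\beta,\mu,\alpha_1,\alpha_2,\varrho$ have all been written out, and to reduce the whole proposition to four successive computations.

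First I would establish the claimed identity $\dastC=-\tilde{R}^\flat\,\vol$. Since $\dastC=-\beta\varrho-\rho\vol$, with $\beta=e^{14}+e^{25}+e^{36}$ and $\varrho=\sum_{i<j<k\leq3}\bx R_{ijk0}\,e^{ijk}$ carrying only indices $\leq3$, every term of $\beta\varrho$ is a $5$-form with a single vertical leg and horizontal part $e^{0123}$. Wedging each of $e^{14},e^{25},e^{36}$ against the one complementary $3$-form $e^{023},e^{013},e^{012}$ of $\varrho$ and reordering yields the coefficients $R_{0230}+R_{3020}$, $-(R_{0130}+R_{3010})$ and $R_{0120}+R_{2010}$, the $R_{ij00}=0$ entries of the cyclic sum dropping out. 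Adding $\rho\vol$ and expanding the Ricci contractions $r(e_i,e_0)=\sum_{k=1}^{3}R_{ki0k}$ (the $k=0$ term vanishing), the coefficients of $e^{01234},e^{01235},e^{01236}$ collapse exactly to $-\tilde R_1,-\tilde R_2,-\tilde R_3$; this is $-\tilde R^\flat\wedge\vol$. Because the left-hand side and $\vol$ are frame-independent, so is $\tilde R$, which is thus a global vector field.

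Second, I would read off the curvature torsion forms via the canonical $G_2$ projections of Section \ref{ttf}. As $\dastC$ is the simple $5$-form $-\tilde R^\flat\vol$, formula (\ref{formdek1})---equivalently contraction with the dual $4$-vector $\vol^\sharp$---gives $\tau_{1,\mathrm{curv}}=-\tfrac13\vol^\sharp\lrcorner\dastC=-\tfrac13\tilde R^\flat$ immediately. The remaining piece $*\tau_{2,\mathrm{curv}}=\dastC-\tau_{1,\mathrm{curv}}\wedge*\phi$ then lies automatically in the summand $*\Omega^2_{14}$ orthogonal to $\Omega^5_7$, so no separate membership check is needed; applying $*$ and regrouping produces $-\tfrac23\tilde R\lrcorner\alpha+\tfrac13\sum_i\tilde R_i\hat\omega_i$. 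To recognise this as $\tfrac13\tilde R\lrcorner(\phi-3\alpha)$ I would expand $\phi=\alpha+\mu\beta-\alpha_2$ and verify, by three interior products, that $\tilde R\lrcorner(\mu\beta-\alpha_2)=\sum_i\tilde R_i\hat\omega_i$ with the $\astil$-anti-selfdual forms $\hat\omega_i$.

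Third, for the codifferential I would use $\delta\phi=-*\dx*\phi$ and compute the curvature contribution $-*\dastC=*(\tilde R^\flat\vol)$ in the frame: from $*e^{01234}=e^{56}$, $*e^{01235}=-e^{46}$, $*e^{01236}=e^{45}$ this equals $\tilde R_1e^{56}-\tilde R_2e^{46}+\tilde R_3e^{45}=\tilde R\lrcorner\alpha$. Under the torsion-free hypothesis $\dastT=0$, since $\sigma,\mu T,T\alpha_1$ all vanish with $T^D$, so $\dx*\phi=\dastC$ and $\delta\phi=\tilde R\lrcorner\alpha$ exactly. Now $\alpha=e^{456}$ is a volume form on the vertical $3$-plane and $\tilde R$ is vertical, so $\delta\phi=0$ iff $\tilde R=0$. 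In the torsion-free case $\tilde R_i=r(e_i,e_0)$, hence vanishing at every $u\in SM$ says $r(v,u)=0$ for all orthonormal $v\perp u$ in each $T_xM$; equivalently the symmetric tensor $r$ is diagonal in every orthonormal frame, forcing $r=\lambda g$ with $\lambda$ constant in dimension $4$ by Schur, i.e. $(M,g)$ Einstein. I expect the bookkeeping of the first step to be the \emph{main obstacle}: correctly pairing each leg of $\beta$ with the right complementary $3$-form of $\varrho$, tracking the reordering signs of the resulting $5$-forms, and above all seeing that the curvature antisymmetries together with $R_{ij00}=0$ make the $\beta\varrho$ and $\rho\vol$ contributions merge into the single vector $\tilde R$; everything downstream is then a clean application of the fixed $G_2$ projection formulas.
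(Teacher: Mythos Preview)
Your proposal is correct and follows essentially the same route as the paper: first reduce $\dastC=-\beta\varrho-\rho\vol$ in the adapted coframe to the single term $-\tilde R^\flat\wedge\vol$, then read off $\tau_{1,\mathrm{curv}}$ via the projection formula (\ref{formdek1}), obtain $\tau_{2,\mathrm{curv}}$ by subtraction and Hodge dual, and finish with the Einstein criterion in the torsion-free case. Your treatment is in fact slightly more careful than the paper's in two places: you make explicit that $\dastT=0$ is needed before concluding $\delta\phi=\tilde R\lrcorner\alpha$ on the nose, and you spell out the Schur-type step from $\tilde R=0$ to $r=\lambda g$, both of which the paper leaves implicit.
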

\begin{proof}
We have
\begin{eqnarray*}
 & & \dastC\ =\ -\beta\varrho-\rho\vol\ =    \\
&& \ \ \ \ =\ (e^{41}+e^{52}+e^{63})\sum\bx R_{ijk0}e^{ijk}-\bigl((R_{1301}+R_{2302})e^{6}+ \bigr.  \\
& &\ \bigl.\hspace{1cm}+(R_{1201}+R_{3203})e^{5}+(R_{2102}+R_{3103})e^{4}\bigr)\vol\:=\:-\sum \tilde{R}_ie^{i+3}\vol.
\end{eqnarray*}
And so $\delta\phi=-*\dx*\phi$ follows. Since $\beta^2$ and $\alpha_1$ have two degrees above 3,
\[ \tau_{1,\mathrm{curv}}=\tfrac{1}{3}*(*\dastC*\phi)= -\tfrac{1}{3}*((\tilde{R}_1e^{56}+\tilde{R}_2e^{64}+\tilde{R}_3e^{45})\vol) =-\tfrac{1}{3}\sum \tilde{R}_ie^{i+3} .\]
Finally, \,$\tau_{2,\mathrm{curv}}\ =$
\begin{eqnarray*}
 &=&*\dastC-*(\tau_{1,\mathrm{curv}}*\phi)\\
&=&(-\tilde{R}_i+\tfrac{1}{3}\tilde{R}_i)*(e^{i+3}\vol)+\tfrac{1}{3}*(\tilde{R}_ie^{i+3}(e^{1245}+e^{1346}+\\
& & \hspace{4.2cm}+e^{2356}-e^{0156}-e^{0264}-e^{0345}))\\
&=&-\tfrac{2}{3}(\tilde{R}_1e^{56}+\tilde{R}_2e^{64}+\tilde{R}_3e^{56})+\tfrac{1}{3}(\tilde{R}_1(e^{01}-e^{23})+\tilde{R}_2(e^{02}+e^{13})+\tilde{R}_3(e^{03}-e^{12}))  \\
&=&-\tfrac{2}{3}\tilde{R}\lrcorner\alpha+\tfrac{1}{3}\tilde{R}_i\hat\omega_i.
\end{eqnarray*}
The formula is further simplified using $\phi=\alpha+\hat\omega_ie^{i+3}$.
In case $T^D=0$, then clearly $\delta\phi=\ric U\lrcorner\alpha$ and this vanishes if, only if, $\ric U$ is a multiple of $U$.
\end{proof}
Now we proceed with the component $\dastT=-\beta\sigma-(\mu T)\alpha_1+\mu(T\alpha_1)$. In order to describe the solutions we need the \textit{adapted} frame of $TSM$ which we have been using. Let
\begin{equation}\label{osZes}
 Z_{ijk}=T_{ijj}+T_{ikk}+T_{jkl}\qquad\mbox{where}\ (i,j,k,l)\ \mbox{is a direct ordering}
\end{equation}
and let
\begin{equation}\label{osWos}
 W_{i}=Z_{ijk}+Z_{ilj}+Z_{ikl}
\end{equation}
ie. the sum of $Z_{i\ldots}$ such that $(i,j,k,l)$ is a direct ordering. Then of course
\begin{equation}\label{osWos2}
 W_i=2T_{ijj}+2T_{ikk}+2T_{ill}+\bx T_{jkl}.
\end{equation}
\begin{prop}\label{tau1tau2compodetorsao}
We have $\dastT=\tau_{1,\mathrm{tors}}\wedge*\phi+*\tau_{2,\mathrm{tors}}$ with
\begin{equation}\label{tau1t}
 \tau_{1,\mathrm{tors}}=\frac{1}{3}\sum_{i=0}^3W_ie^i
\end{equation}
and
\begin{equation}\label{tau2t}
 \begin{split}
\tau_{2,\mathrm{tors}}\:=\:
(\tfrac{1}{3}W_3-Z_{321})e^{06}-(\tfrac{1}{3}W_2-Z_{230})e^{16}
+(\tfrac{1}{3}W_1-Z_{103})e^{26}-(\tfrac{1}{3}W_0-Z_{012})e^{36}\\
 +(\tfrac{1}{3}W_2-Z_{213})e^{05}+(\tfrac{1}{3}W_3-Z_{302})e^{15}
-(\tfrac{1}{3}W_0-Z_{031})e^{25}-(\tfrac{1}{3}W_1-Z_{120})e^{35}\\
 +(\tfrac{1}{3}W_1-Z_{132})e^{04}-(\tfrac{1}{3}W_0-Z_{023})e^{14}
-(\tfrac{1}{3}W_3-Z_{310})e^{24}+(\tfrac{1}{3}W_2-Z_{201})e^{34}.
 \end{split}
\end{equation}
\end{prop}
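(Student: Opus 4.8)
The plan is to replicate, for the torsion part, the computation that settled the curvature part in Proposition \ref{tau1etau2}: I isolate the Lee-form contribution by the Fern\'andez--Gray extraction formula \eqref{formdek1} and then read off $\tau_2$ as the Hodge dual of what remains. The decomposition $\dastT=\tau_{1,\mathrm{tors}}\wedge*\phi+*\tau_{2,\mathrm{tors}}$ is the torsion half of \eqref{torsoes0}, and its uniqueness is guaranteed by the $G_2$-equivariance of $\kappa_1\mapsto\kappa_1\wedge*\phi$, so the whole proof is a matter of making both pieces explicit in the adapted coframe.

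First I would write $\dastT=-\beta\sigma-(\mu T)\alpha_1+\mu(T\alpha_1)$ out completely in the frame $e^0,\ldots,e^6$, using the expressions already at hand: $\sigma$ from \eqref{dbeta}, the $2$-form $\mu T=\mu(T)=\sum_{0\leq a<b\leq3}T_{ab0}\,e^{ab}$ read off from $\dx\mu=-\beta+\mu(T)$ in part 6 of Proposition \ref{derivacoesexteriores}, and the $4$-form $T\alpha_1$ from the torsion part of \eqref{dalpha1}. Wedging $\beta=e^{14}+e^{25}+e^{36}$ against $\sigma$, wedging $(\mu T)$ against $\alpha_1=e^{156}+e^{264}+e^{345}$, and wedging $\mu=e^0$ against $T\alpha_1$ produces $\dastT$ as a $5$-form each of whose terms carries exactly three base indices $\leq3$ and two fibre indices in $\{4,5,6\}$, so that its Hodge dual $*\dastT$ is a $2$-form supported on the mixed pairs $e^{i,\,j+3}$ --- precisely the shape of the answer in \eqref{tau2t}. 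The quantities $Z_{ijk}$ of \eqref{osZes} and $W_i$ of \eqref{osWos}--\eqref{osWos2} are introduced exactly to package the resulting linear combinations of the $T_{ijk}$ so that the next two steps close up.

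Next I would apply the extraction formula $\tau_{1,\mathrm{tors}}=\tfrac13*(*\dastT\wedge*\phi)$, expanding $*\phi=\vol-\tfrac12\beta^2-\mu\alpha_1$ and evaluating the wedges with the first structure equations of Proposition \ref{bse}. After regrouping the torsion coefficients, this should collapse to $\tfrac13\sum_{i=0}^3 W_i e^i$, giving \eqref{tau1t}. I would then compute $\tau_{2,\mathrm{tors}}=*\dastT-*(\tau_{1,\mathrm{tors}}\wedge*\phi)$, exactly as in the curvature case, once more using Proposition \ref{bse} to evaluate $*(\tau_{1,\mathrm{tors}}\wedge*\phi)$, and collect terms; the difference should reorganise into the twelve coefficients of the form $\tfrac13 W_a-Z_{bcd}$ displayed in \eqref{tau2t}. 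As a consistency check one verifies $\tau_{2,\mathrm{tors}}\in\Omega^2_{14}$, that is $\tau_{2,\mathrm{tors}}\wedge*\phi=0$ (equivalently $\tau_{2,\mathrm{tors}}\wedge\phi=*\tau_{2,\mathrm{tors}}$), which is forced by uniqueness but worth confirming against the explicit coefficients.

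The main obstacle is combinatorial rather than conceptual: tracking the many torsion components $T_{ijk}$ with three base indices through the wedge and Hodge operations, and confirming that they regroup \emph{exactly} into the prescribed $Z_{ijk}$ and $W_i$. The delicate point is that both the $W_i$ in $\tau_1$ and the $\tfrac13 W_a-Z_{bcd}$ in $\tau_2$ are dictated by the representation-theoretic splitting, so the definitions \eqref{osZes}--\eqref{osWos2} must be reverse-engineered to match them; any sign slip in the orientation convention for the ``direct ordering'' $(i,j,k,l)$, or in the Hodge duals of Proposition \ref{bse}, would propagate through all twelve coefficients. I expect essentially all the labour to lie in keeping that sign bookkeeping straight.
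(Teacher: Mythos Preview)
Your proposal is correct and follows essentially the same route as the paper: write $\dastT$ explicitly in the adapted coframe, extract $\tau_{1,\mathrm{tors}}$ via the Fern\'andez--Gray formula \eqref{formdek1}, and obtain $\tau_{2,\mathrm{tors}}$ as the Hodge dual of the remainder. The paper organises the bookkeeping slightly differently, collecting $\dastT$ into three base $3$-forms $\Theta_1,\Theta_2,\Theta_3$ so that $*\dastT=\astil\Theta_1\,e^6+\astil\Theta_2\,e^5+\astil\Theta_3\,e^4$, and then identifies their coefficients $a_i,b_i,c_i$ one by one with the $Z_{ijk}$; this is exactly the sign-tracking device you anticipate needing, but otherwise the argument is the one you outline.
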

\begin{proof}
From (\ref{dalpha1}) and (\ref{dbeta}) we find
\begin{eqnarray*}
\dastT &=& \sum_{0\leq i<j\leq3}-(T_{ij1}e^{ij4(25+36)}+T_{ij2}e^{ij5(14+36)}+T_{ij3}e^{ij6(14+25)})\\
& &    \ \ -T_{ij0}e^{ij(156+264+345)}+T_{ij1}e^{0ij56}+T_{ij2}e^{0ij64}+T_{ij3}e^{0ij45}\\
&=& \ (\sum\:T_{ij1}e^{ij2}-T_{ij2}e^{ij1}-T_{ij0}e^{ij3}+T_{ij3}e^{0ij})e^{45}\\
& & +(\sum\:-T_{ij1}e^{ij3}+T_{ij3}e^{ij1}-T_{ij0}e^{ij2}+T_{ij2}e^{0ij})e^{64}\\
& & +(\sum\:T_{ij2}e^{ij3}-T_{ij3}e^{ij2}-T_{ij0}e^{ij1}+T_{ij1}e^{0ij})e^{56}.
\end{eqnarray*}
This defines 3-forms $\Theta_i$ such that 
\[ *\dastT=\astil\Theta_1e^6+\astil\Theta_2e^5+\astil\Theta_3e^4.  \]
where $\astil$ is the pull-back of the Hodge operator on $M$. Notice the orientation: we pertain to the canonical operator. For instance, $\astil e^1=-e^{023}$, so that $*e^1=(\astil e^1)\alpha$.
Now it is easy to see
\[ *(\dastT)*\phi=\bigl((\astil\Theta_1)(e^{12}-e^{03})+(\astil\Theta_2)(e^{31}-e^{02})+ 
 (\astil\Theta_3)(e^{23}-e^{01}) \bigr)\alpha . \]
Let $\Theta_1=a_0e^{123}+a_1e^{023}+a_2e^{013}+a_3e^{012}$, and define analogously coefficients $b_i,c_i$, $i=0,\ldots,3$, for $\Theta_2,\Theta_3$. Now we compute
\begin{equation*}
\begin{split}
 \astil\bigl((\astil(a_0e^{123}+a_1e^{023}+a_2e^{013}+a_3e^{012}))(e^{12}-e^{03})\bigr) \\
=\astil\bigl((-a_0e^{0}+a_1e^{1}-a_2e^{2}+a_3e^{3})(e^{12}-e^{03})\bigr)\\
=-a_0e^{3}-a_1e^{2}-a_2e^{1}-a_3e^{0},
\end{split}
\end{equation*}
\begin{equation*}
\begin{split}
 \astil\bigl((\astil(b_0e^{123}+b_1e^{023}+b_2e^{013}+b_3e^{012}))(e^{31}-e^{02})\bigr) \\
=\astil\bigl((-b_0e^{0}+b_1e^{1}-b_2e^{2}+b_3e^{3})(e^{31}-e^{02})\bigr)\\
=-b_0e^{2}+b_1e^{3}+b_2e^{0}-b_3e^{1},
\end{split}
\end{equation*}
\begin{equation*}
\begin{split}
 \astil\bigl((\astil(c_0e^{123}+c_1e^{023}+c_2e^{013}+c_3e^{012}))(e^{23}-e^{01})\bigr) \\
=\astil\bigl((-c_0e^{0}+c_1e^{1}-c_2e^{2}+c_3e^{3})(e^{23}-e^{01})\bigr)\\
=-c_0e^{1}-c_1e^{0}+c_2e^{3}+c_3e^{2}.
\end{split}
\end{equation*}
And we deduce and verify:
\begin{equation*}
 \begin{array}{l}
  a_0=T_{311}+T_{322}-T_{120}=Z_{321}\\
  a_1=T_{301}+T_{200}+T_{233}=Z_{230}\\
  a_2=T_{032}-T_{010}+T_{133}=Z_{103}\\
  a_3=T_{011}+T_{022}+T_{123}=Z_{012}
 \end{array}
\end{equation*}
\begin{equation*}
 \begin{array}{l}
  b_0=T_{211}+T_{233}+T_{130}=Z_{213}\\
  b_1=-T_{021}-T_{300}-T_{322}=-Z_{302}\\
  b_2=-T_{011}-T_{033}+T_{132}=-Z_{031}\\
  b_3=-T_{023}-T_{010}+T_{122}=Z_{120}
 \end{array}
\end{equation*}
\begin{equation*}
 \begin{array}{l}
  c_0=T_{122}+T_{133}-T_{230}=Z_{132}\\
  c_1=T_{022}+T_{033}+T_{231}=Z_{023}\\
  c_2=T_{012}-T_{300}-T_{311}=-Z_{310}\\
  c_3=-T_{013}-T_{200}-T_{211}=-Z_{201}.
 \end{array}
\end{equation*}
Finally we achieve
\begin{eqnarray*}
3\tau_{1,\mathrm{tors}}&=& *(*(\dastT)*\phi)\\
&=&*\bigl(\bigl((\astil\Theta_1)(e^{12}-e^{03})+(\astil\Theta_2)(e^{31}-e^{02})+ 
 (\astil\Theta_3)(e^{23}-e^{01})\bigr)\alpha\bigr)\\
&=&-\astil\bigl((\astil\Theta_1)(e^{12}-e^{03})+(\astil\Theta_2)(e^{31}-e^{02})+ 
 (\astil\Theta_3)(e^{23}-e^{01})\bigr)\\
&=& (a_3+c_1-b_2)e^0+(a_2+b_3+c_0)e^1+(a_1+b_0-c_3)e^2+(a_0-b_1-c_2)e^3
\end{eqnarray*}
and the first part of the result follows since we notice these are precisely the $W_i$ coefficients. For the second part we have 
\begin{eqnarray*}
 \tau_{2,\mathrm{tors}}&=&*(\dastT-\tfrac{1}{3}W_ie^i*\phi)\\
 &=&\astil\Theta_1e^6+\astil\Theta_2e^5+\astil\Theta_3e^4-\tfrac{1}{3}*\bigl(W_0(e^{01245}+e^{01346}+e^{02356})+ W_1(e^{12356}+  \bigr.\\
& & \bigl. + e^{01264}+e^{01345})+W_2(e^{02345}-e^{12346}-e^{01256})
+W_3(e^{12345}-e^{01356}-e^{02364})\bigr)\\
&=& (\tfrac{1}{3}W_3-a_0)e^{06}+(a_1-\tfrac{1}{3}W_2)e^{16}+
(\tfrac{1}{3}W_1-a_2)e^{26}+(a_3-\tfrac{1}{3}W_0)e^{36}+\\
& & +(\tfrac{1}{3}W_2-b_0)e^{05}+(b_1+\tfrac{1}{3}W_3)e^{15}+
(-\tfrac{1}{3}W_0-b_2)e^{25}+(b_3-\tfrac{1}{3}W_1)e^{35}+\\
& & +(\tfrac{1}{3}W_1-c_0)e^{04}+(c_1-\tfrac{1}{3}W_0)e^{14}+
(-\tfrac{1}{3}W_3-c_2)e^{24}+(c_3+\tfrac{1}{3}W_2)e^{34}
\end{eqnarray*}
and this is the result by analysis of the formulae previously deduced.
\end{proof}

\subsection{Computation of torsion tensors $\tau_0$ and $\tau_3$}

Here we deduce the $G_2$-torsion tensores of $\dx\phi$. To start with it proves useful to see the following.
\begin{prop}\label{mualpha1beta2voldecompostos}
We have the following $G_2$-decompositions all with no $\Omega_7^1$ part:
\begin{meuenumerate}
\item $2\mu\alpha_1=-\frac{6}{7}*\phi+*\frac{1}{7}(6\alpha+6\mu\beta+8\alpha_2)$.
\item $\beta^2=-\frac{6}{7}*\phi+*\frac{1}{7}(6\alpha-8\mu\beta-6\alpha_2)$.
\item $\vol=\frac{1}{7}*\phi+*\frac{1}{7}(6\alpha-\mu\beta+\alpha_2)$.
\end{meuenumerate}
\end{prop}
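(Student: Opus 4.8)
The plan is to verify each of the three $G_2$-decompositions by decomposing the relevant $5$-form into its components in $\Lambda^5 = \Lambda^5_1 \oplus \Lambda^5_7 \oplus \Lambda^5_{27}$, the Hodge-dual picture of the degree-$2$ decomposition in (\ref{decomp}). Since the star operator is a $G_2$-isomorphism, the claim that there is no $\Omega^1_7$ part in each formula amounts to saying each $5$-form has vanishing $\Lambda^5_7$ component, i.e. is a sum of a $\Lambda^5_1$ piece (a multiple of $*\phi$) and a $\Lambda^5_{27}$ piece (the Hodge-dual of a form in $\Omega^3_{27}$, equivalently $*$ of a $2$-form orthogonal to $\phi$ in the appropriate sense). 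Concretely, I would fix the adapted orthonormal frame $e^0,\ldots,e^6$ of section \ref{tse}, in which $\phi=\alpha+\hat\omega_ie^{i+3}$ and $*\phi=\vol-\frac12\beta^2-\mu\alpha_1$ are already expressed in components.

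The main engine of the proof is the projection formula already recalled in the paper: for a $5$-form $\kappa_5$, its $\Lambda^1_7$-avatar under the first decomposition in (\ref{torsoes0}) is given by $\frac13*(*\kappa_5\wedge*\phi)$, equation (\ref{formdek1}). Thus the first concrete step is, for each of the three $5$-forms $\kappa_5 \in \{2\mu\alpha_1,\ \beta^2,\ \vol\}$, to compute $*\kappa_5$ (a $2$-form, using the frame identities of proposition \ref{bse}) and then wedge with $*\phi$; I expect this to vanish in every case, certifying the absence of the $\Omega^1_7$ part. The second step is to extract the scalar $\Lambda^5_1$-component, i.e. the coefficient $c$ in $\kappa_5 = c\,*\phi + (\text{the }\Lambda^5_{27}\text{ remainder})$. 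This coefficient is read off by pairing with $*\phi$ using $|\phi|^2=7$: since $\langle *\phi,*\phi\rangle=|\phi|^2=7$ and the $27$-part is orthogonal to $*\phi$, one has $c = \langle \kappa_5,*\phi\rangle/7 = *(\kappa_5\wedge\phi)/7$, and the structure equations (\ref{bse1})--(\ref{bse2}) supply each wedge product $\kappa_5\wedge\phi$ directly. This pins the coefficients $-6/7$, $-6/7$, $1/7$ in parts 1, 2, 3.

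Having fixed the scalar parts, the third step is purely bookkeeping: the remainder $\kappa_5 - c\,*\phi$ must be written as $*$ of an explicit $2$-form, and one checks it equals the stated $*\frac17(\cdots)$ by applying Hodge-$*$ to the claimed $2$-form and matching against the frame expansion. Here I would lean on the duality relations $*\alpha=\vol$, $*\beta^2=-2\mu\beta$, $*(\mu\beta)=-\frac12\beta^2$, $*\alpha_2=\mu\alpha_1$ and their inverses from (\ref{bse1}) to translate between the degree-$2$ and degree-$5$ pictures without reindexing every basis monomial. The claimed coefficients $6\alpha+6\mu\beta+8\alpha_2$, $6\alpha-8\mu\beta-6\alpha_2$ and $6\alpha-\mu\beta+\alpha_2$ should then fall out of the linear algebra.

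The step I expect to be the main obstacle is the consistency check in the third step, namely confirming that the $\Lambda^5_{27}$ remainders really are Hodge-duals of genuine $\Lambda^2_{14}$-type forms and that the numerical coefficients organise into the clean combinations stated; this is where the nontrivial cancellations live, since three $5$-forms must decompose over the same $\Omega^3_{27}$ basis and the apparent coefficient $8$ against $6$ has to emerge correctly from the projection. A useful sanity check, which I would run in parallel, is that subtracting the three identities pairwise must reproduce known relations among $\vol,\beta^2,\mu\alpha_1$ and $*\phi$; in particular the definition $*\phi=\vol-\frac12\beta^2-\mu\alpha_1$ itself imposes one linear relation among parts 1, 2, 3, which the three formulas must satisfy identically and which therefore validates the computation.
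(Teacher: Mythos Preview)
Your plan has a systematic degree error that would derail the computation as written. The forms $2\mu\alpha_1$, $\beta^2$ and $\vol=f^*\vol_M$ are all \emph{4-forms}, not 5-forms: $\mu$ is a 1-form and $\alpha_1$ a 3-form, $\beta$ is a 2-form, and $\vol_M$ has degree $4$. Consequently the relevant decomposition is $\Lambda^4=\Lambda^4_1\oplus\Lambda^4_7\oplus\Lambda^4_{27}$, the Hodge dual of the degree-$3$ decomposition, not of the degree-$2$ one; there is no $\Lambda^5_{27}$. The projection formula (\ref{formdek1}) you intend to invoke is specific to 5-forms and does not apply here, and your description of the remainder as the Hodge dual of a $\Lambda^2_{14}$ form is likewise off by one: the remainder must be $*\eta$ with $\eta\in\Lambda^3_{27}$, characterised by $\eta\wedge\phi=0$ and $\eta\wedge*\phi=0$.

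Once you correct the degrees, your strategy is viable: the scalar part is indeed $c=*(\kappa\wedge\phi)/7$ (this survives your confusion because $\phi\wedge\phi=0$), and the $\Lambda^4_7$ vanishing can then be tested. But the paper takes a shorter route that bypasses any projection formula for the $7$-component: it simply rewrites each 4-form tautologically as $c\,*\phi+*(\text{3-form})$ using the duality relations of proposition~\ref{bse} (e.g.\ $2\mu\alpha_1=-\tfrac{6}{7}*\phi+*(\tfrac{6}{7}\phi+2\alpha_2)$, which is an identity since $*(2\mu\alpha_1)=2\alpha_2$), and then checks directly that the resulting 3-form lies in $\Omega^3_{27}$ by verifying $\tau_3\wedge\phi=0$ and $\tau_3\wedge*\phi=0$ via the structure equations (\ref{bse1})--(\ref{bse2}). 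This guess-and-verify approach is quicker here because the Hodge duals $*\alpha=\vol$, $*(\mu\alpha_1)=\alpha_2$, $*\beta^2=-2\mu\beta$ are already tabulated, so no genuine projection computation is needed.
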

\begin{proof}
The formulas follow from the decompositions $2\mu\alpha_1=-\frac{6}{7}*\phi+*(\frac{6}{7}\phi+2\alpha_2)$,  $\beta^2=-\tfrac{6}{7}*\phi+*(*\beta^2+\tfrac{6}{7}\phi)$ and $\vol=\tfrac{1}{7}*\phi+*(\alpha-\tfrac{1}{7}\phi)$, which satisfy the required equations. For example, the first $\tau_3$ verifies clearly $\tau_3\phi=0$ and $\tau_3*\phi=6\Vol-2\alpha_2*\alpha_2=0$.
\end{proof}
\begin{prop}
 In the decomposition of $\dx\phi$ we have $\tau_0=\frac{2}{7}(\underline{r}-l+6)$.
\end{prop}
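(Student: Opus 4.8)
The torsion decomposition (\ref{torsoes0}) writes $\dx\phi=\tau_0*\phi+\frac{3}{4}\tau_1\wedge\phi+*\tau_3$, where the last two summands lie in $\Lambda^4_7$ and $\Lambda^4_{27}$ and are therefore orthogonal to $\Lambda^4_1=\R*\phi$. Since $|*\phi|^2=|\phi|^2=7$, pairing with $*\phi$ isolates $\tau_0=\frac{1}{7}\langle\dx\phi,*\phi\rangle$; equivalently, $\tau_0$ is the sum of the $*\phi$-components of the seven summands of $\dx\phi$ in (\ref{dphiedestrelaphi1}). Three of these are read off at once from Proposition \ref{mualpha1beta2voldecompostos}: the $\Lambda^4_1$ parts of $\vol$, $\beta^2$ and $2\mu\alpha_1$ are $\frac{1}{7}*\phi$, $-\frac{6}{7}*\phi$ and $-\frac{6}{7}*\phi$, so the summands $(\underline r-l)\vol$, $-\beta^2$ and $-2\mu\alpha_1$ contribute $\frac{\underline r-l}{7}$, $\frac{6}{7}$ and $\frac{6}{7}$ (times $*\phi$) respectively.

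It then remains to show that the three torsion summands contribute nothing and to compute the $*\phi$-component of the curvature term $\calR\alpha$. For the first point I would grade the coframe by the number of indices lying in $\{0,1,2,3\}$ (horizontal) versus $\{4,5,6\}$ (vertical). Inspection of $\mu T=\sum_{i<j}T_{ij0}\,e^{ij}$ together with (\ref{dalpha2}) and (\ref{dbeta}) shows that each of $(\mu T)\beta$, $\mu\sigma$ and $T\alpha_2$ is a combination of basis $4$-forms with exactly three horizontal and one vertical index, whereas $*\phi=\vol-\frac{1}{2}\beta^2-\mu\alpha_1$ involves only basis $4$-forms of bidegrees $(4,0)$ and $(2,2)$. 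As distinct basis forms are orthogonal, the pairing of each torsion summand with $*\phi$ vanishes.

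The decisive step is the curvature term. Its contribution to $\langle\dx\phi,*\phi\rangle$ is $\langle\calR\alpha,*\phi\rangle=*(\calR\alpha\wedge\phi)$, using $**\phi=\phi$ in dimension seven, so I must evaluate $\calR\alpha\wedge\phi$ from the explicit $\calR\alpha$ of (\ref{dalpha}) and $\phi=\alpha+\mu\beta-\alpha_2$. The product $\calR\alpha\wedge\alpha$ vanishes for bidegree reasons (a $(2,2)$-form wedged with the $(0,3)$-form $\alpha$ would need five distinct vertical slots), and a direct frame computation should give $\calR\alpha\wedge\mu\beta=-l\,\Vol_{SM}$ and $\calR\alpha\wedge\alpha_2=-\underline r\,\Vol_{SM}$. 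Here one uses only the two skew-symmetries $R_{ijkl}=-R_{jikl}=-R_{ijlk}$, valid for any metric connection with no appeal to the torsion-free Bianchi identity, to recognise $\sum_{k=1}^{3}R_{0k0k}=-\underline r$ and $R_{2301}-R_{1302}+R_{1203}=-l$. Hence $\calR\alpha\wedge\phi=(\underline r-l)\Vol_{SM}$, so $\calR\alpha$ contributes a further $\frac{\underline r-l}{7}$ (times $*\phi$) to $\tau_0$.

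Adding the five nonzero contributions gives
\[\tau_0=\frac{\underline r-l}{7}+\frac{\underline r-l}{7}+\frac{6}{7}+\frac{6}{7}=\frac{2}{7}(\underline r-l+6),\]
as claimed. I expect the only genuine obstacle to be the sign bookkeeping in the frame computation of the previous paragraph: permuting the seven coframe indices back to $e^{0123456}$ and matching the resulting curvature contractions to $\underline r$ and $l$ with the correct signs is where slips are most likely.
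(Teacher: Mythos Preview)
Your proposal is correct and follows essentially the same approach as the paper: both isolate $\tau_0$ via $(\dx\phi)\wedge\phi=7\tau_0\,\Vol_{SM}$ (equivalently your pairing $\langle\dx\phi,*\phi\rangle$), dispose of the torsion summands by the bidegree argument (the paper phrases this as the terms being ``too heavy'' or not reaching $e^{456}$), and obtain the remaining contributions by wedging with $\phi$. The only cosmetic difference is that you read off the $\vol$, $\beta^2$, $2\mu\alpha_1$ contributions from Proposition~\ref{mualpha1beta2voldecompostos}, while the paper computes the corresponding wedges directly from the first structure equations; your intermediate values $\calR\alpha\wedge\mu\beta=-l\,\Vol_{SM}$ and $\calR\alpha\wedge\alpha_2=-\underline r\,\Vol_{SM}$ agree with the paper's line-by-line expansion.
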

\begin{proof}
 Notice the wedge of a 4-form, as $\dx\phi$, with $\phi$ gives $7\tau_0\Vol_{SM}$. Indeed, the $G_2$-kernel of such map must contain $*\Omega^3_{7}\oplus*\Omega^3_{27}$. Now, since $((\mu T)\beta-\mu\sigma-T\alpha_2)\phi=0$ because its summands are either \textit{too heavy} or do not reach $e^{456}$, we get
\begin{eqnarray*}
 \lefteqn{ (\dx\phi)\phi\ =}\\ &=&(\calR\alpha-\beta^2+(\underline{r}-l)\vol-2\mu\alpha_1)(\alpha+\mu\beta-\alpha_2)\\
&=&(\calR\alpha)\mu\beta-(\calR\alpha)\alpha_2-\mu\beta^3+(\underline{r}-l)\vol\alpha +2\mu\alpha_1\alpha_2\\
&=& (R_{2301}-R_{1302}+R_{1203}-R_{0303}-R_{0101}-R_{0202}+6+\underline{r}-l+6)\Vol\\
&=& (2\underline{r}-2l+12)\Vol
\end{eqnarray*}
and the result follows.
\end{proof}
Now recall $\tau_{1,\mathrm{curv}}=-\frac{1}{3}\tilde{R}^\flat$ and recall the frame $\hat{\omega}_1,\hat{\omega}_2,\hat{\omega}_3$ of $\astil$-anti-selfdual 2-forms, cf. proposition \ref{tau1etau2}. We remark
\begin{equation}
 \phi=\alpha+\hat{\omega}_1e^4+\hat{\omega}_2e^5+\hat{\omega}_3e^6.
\end{equation}
Then the reader may check that the following result is correct, ie. $\tau_3'\in\Omega^3_{27}$. \begin{teo}\label{decompRalpha}
 $\calR\alpha=\frac{1}{7}(\underline{r}-l)*\phi+\frac{3}{4}\tau_{1,\mathrm{curv}}\phi+*\tau_3'$ with
\begin{equation}
 \begin{split}
  \tau_3'=-\tfrac{1}{7}(\underline{r}-l)\phi
+\bigl(R_{ij01}\astil e^{ij} -\tfrac{1}{4}(\tilde{R}_2\hat\omega_3-\tilde{R}_3\hat\omega_2)\bigr)e^4\\
+\bigl(R_{ij02}\astil e^{ij}
-\tfrac{1}{4}(\tilde{R}_3\hat\omega_1-\tilde{R}_1\hat\omega_3)\bigr)e^5\\
+\bigl(R_{ij03}\astil e^{ij}
-\tfrac{1}{4}(\tilde{R}_1\hat\omega_2-\tilde{R}_2\hat\omega_1)\bigr)e^6.
 \end{split}
\end{equation}
\end{teo}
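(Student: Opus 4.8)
The plan is to view $\calR\alpha=\dx\alpha$ (proposition \ref{derivacoesexteriores}.1) as a $4$-form on $SM$ and to split it along the $G_2$-module decomposition $\Omega^4=\Omega^4_1\oplus\Omega^4_7\oplus\Omega^4_{27}$, the Hodge-dual of the second splitting in (\ref{decomp}). Comparing with (\ref{torsoes0}), where the three pieces of $\dx\phi$ are $\tau_0*\phi\in\Omega^4_1$, $\frac34\tau_1\wedge\phi\in\Omega^4_7$ and $*\tau_3\in\Omega^4_{27}$, the assertion is exactly the identification of these three components for the single summand $\calR\alpha$. First I would fix the singlet, then the $\Omega_7$-part; the tensor $\tau_3'$ is then forced to be the Hodge dual of what remains and lies in $\Omega^3_{27}$ automatically by uniqueness of the decomposition.

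For the singlet, $*\phi$ spans $\Omega^4_1$ and $\langle*\phi,*\phi\rangle=\langle\phi,\phi\rangle=7$, so the $\Omega^4_1$-projection of $\calR\alpha$ equals $\frac17\langle\calR\alpha,*\phi\rangle\,*\phi=\frac17*(\calR\alpha\wedge\phi)\,*\phi$. Every monomial of $\calR\alpha$ in (\ref{dalpha}) carries exactly two of the vertical indices $4,5,6$, whence $\calR\alpha\wedge\alpha=0$ and $\calR\alpha\wedge\phi=\calR\alpha\wedge\mu\beta-\calR\alpha\wedge\alpha_2$. This is the very combination already evaluated inside the proof that $\tau_0=\frac27(\underline{r}-l+6)$; there it contributes $(R_{2301}-R_{1302}+R_{1203}-R_{0303}-R_{0101}-R_{0202})\Vol$, and the metric skew-symmetries of $R$ collapse the first three terms to $-l\,\Vol$ and the last three to $\underline{r}\,\Vol$. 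Hence $\calR\alpha\wedge\phi=(\underline{r}-l)\Vol$ and the singlet of $\calR\alpha$ is $\frac17(\underline{r}-l)*\phi$.

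For the $\Omega^4_7$-part I would bypass a direct projection and argue by consistency with $\dx*\phi$. The Fern\'andez--Gray framework \cite{FerGray} guarantees that the Lee form in the $\dx\phi$-decomposition coincides with the one read off from $\dx*\phi$ in propositions \ref{tau1etau2} and \ref{tau1tau2compodetorsao}, and that it splits as $\tau_1=\tau_{1,\mathrm{curv}}+\tau_{1,\mathrm{tors}}$ with $\tau_{1,\mathrm{curv}}=-\frac13\tilde{R}^\flat$ depending only on $R^D$. In the curvature part $\calR\alpha+(\underline{r}-l)\vol-\beta^2-2\mu\alpha_1$ of (\ref{dphiedestrelaphi1}), proposition \ref{mualpha1beta2voldecompostos} exhibits $\vol,\beta^2,2\mu\alpha_1$ with no $\Omega^4_7$ component. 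Thus $\calR\alpha$ is the only curvature-dependent summand of $\dx\phi$ feeding $\Omega^4_7$, and its $\Omega^4_7$-part must be precisely the curvature portion $\frac34\tau_{1,\mathrm{curv}}\wedge\phi$ of the total $\frac34\tau_1\wedge\phi$.

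It then remains to set $\tau_3':=*\calR\alpha-\frac17(\underline{r}-l)\phi-\frac34*(\tau_{1,\mathrm{curv}}\wedge\phi)$, which, since the Hodge star squares to the identity in dimension seven, is equivalent to the stated identity. The explicit frame formula follows from two Hodge computations. First, $*\calR\alpha$ sends the monomials $e^{ij56},e^{ij64},e^{ij45}$ of (\ref{dalpha}) to $\astil e^{ij}$ wedged with $e^4,e^5,e^6$ respectively, producing the $R_{ij0k}\astil e^{ij}$ terms. Second, $-\frac34*(\tau_{1,\mathrm{curv}}\wedge\phi)=\frac14*(\tilde{R}^\flat\wedge\phi)$; expanding through $\phi=\alpha+\hat\omega_1 e^4+\hat\omega_2 e^5+\hat\omega_3 e^6$ and using the $\astil$-anti-selfduality of the $\hat\omega_i$ yields the $-\frac14(\tilde{R}_i\hat\omega_j-\tilde{R}_j\hat\omega_i)$ corrections. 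The genuine obstacle is only bookkeeping: tracking the signs of the seven-dimensional Hodge star across all twelve monomials of $\calR\alpha$ and confirming that the assembled $\tau_3'$ satisfies $\tau_3'\wedge\phi=\tau_3'\wedge*\phi=0$, the defining equations of $\Omega^3_{27}$. By uniqueness this final check is automatic once the singlet and $\Omega_7$ coefficients above are correct, so it functions as the sanity verification the statement invites the reader to make.
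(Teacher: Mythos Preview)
Your proposal is correct and follows the natural route the paper leaves to the reader: the paper itself supplies no proof beyond the invitation to ``check that $\tau_3'\in\Omega^3_{27}$'', and you carry this out constructively by first isolating the singlet via $\calR\alpha\wedge\phi=(\underline r-l)\Vol$ (exactly the computation appearing in the proof of $\tau_0$), then pinning down the $\Omega^4_7$ piece, and finally reading off $\tau_3'$ by Hodge duality. The one point worth making explicit is the justification that the $\Omega^4_7$-part of $\calR\alpha$ equals $\tfrac34\tau_{1,\mathrm{curv}}\phi$: your consistency argument relies on the torsion summand $(\mu T)\beta-\mu\sigma-T\alpha_2$ contributing exactly $\tfrac34\tau_{1,\mathrm{tors}}\phi$, which is cleanest seen by noting that both sides of the identity are linear in the independent pointwise data $R_{ij0k}$ and $T_{ijk}$, so the curvature and torsion contributions must match separately (equivalently, specialise to $T^D=0$). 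With that remark your argument is complete and coincides with what the paper intends.
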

The theorem says that $\calR\alpha$ acconts for the whole $\tau_{1,\mathrm{curv}}$ component.
Now an interesting phenomena occurs in a summand of (\ref{dphiedestrelaphi1}). One may patiently check the following:
\begin{equation}\label{mutbetamusigmatalpha2}
 \begin{split}
  (\mu T)\beta-\mu\sigma-T\alpha_2= 
(-Z_{132}e^{123}-Z_{023}e^{023}+Z_{310}e^{013}+Z_{201}e^{012})e^4\\
+(Z_{213}e^{123}+Z_{302}e^{023}+Z_{031}e^{013}-Z_{120}e^{012})e^5\\
-(Z_{321}e^{123}+Z_{230}e^{023}+Z_{103}e^{013}+Z_{012}e^{012})e^6.
 \end{split}
\end{equation}
The $Z_{...}$ were defined in (\ref{osZes}). The expression of this side of the torsion of $M$ on the \gwistor\ space preserves the orientation in some sense, cf. section \ref{EstudodaTorcao}. Thus the respective tensor $\tau_3$ can be deduced by the usual procedure: there is no component $\tau_0$, so we just subtract $\frac{3}{4}\tau_{1,\mathrm{tors}}\phi$, given in (\ref{tau1t}), and then take the Hodge dual. We leave this to the interested reader.

\subsection{The case of anti-$Z$ type torsion}
\label{TcoaZtt}

We have deduced that the condition $Z_{ijk}=0$, $\forall ijkl$ in direct order, is necessary and sufficient for the vanishing of $\tau_{1,\mathrm{tors}}$ and $\tau_{2,\mathrm{tors}}$, cf. proposition \ref{tau1tau2compodetorsao}. And from what we have just deduced in (\ref{mutbetamusigmatalpha2}) it is the same equivalent condition for the vanishing of $(\mu T)\beta-\mu\sigma-T\alpha_2$. Let us say a connection $D$ on $M$ is of the \textit{anti-$Z$ type} if $Z_{ijk}=T_{ijj}+T_{ikk}+T_{jkl}=0$, $\forall ijkl$ in direct ordering. This condition is studied in section \ref{EstudodaTorcao}.
\begin{prop}
 The following are equivalent:
\begin{meuenumerate}
\item $T^D$ is of the anti-$Z$ type.
\item $\tau_{1,\mathrm{tors}}=\tau_{2,\mathrm{tors}}=0$, this is $\dastT=0$.
\item $\beta\sigma+(\mu T)\alpha_1-\mu(T\alpha_1)=0$.
\item $(\mu T)\beta-\mu\sigma-T\alpha_2=0$.
\end{meuenumerate}
\end{prop}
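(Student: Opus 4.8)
The plan is to prove the four-way equivalence by establishing the cycle $(1)\Leftrightarrow(2)$ together with $(2)\Leftrightarrow(3)$ and $(1)\Leftrightarrow(4)$, exploiting the explicit coordinate expressions already computed in propositions \ref{tau1tau2compodetorsao} and in equation (\ref{mutbetamusigmatalpha2}). The key observation is that all four conditions are expressed in the same adapted frame $e^0,\ldots,e^6$ and in terms of the same scalar quantities $Z_{ijk}$ and $W_i$, so the equivalences reduce to elementary linear algebra among these coefficients rather than to any fresh geometric computation.

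First I would dispatch $(1)\Leftrightarrow(2)$. The definition (\ref{osWos}) shows each $W_i$ is a sum of three $Z_{ijk}$-terms, so if all $Z_{ijk}$ vanish then certainly every $W_i$ vanishes and, by (\ref{tau1t}) and (\ref{tau2t}), both $\tau_{1,\mathrm{tors}}$ and $\tau_{2,\mathrm{tors}}$ vanish; that is the easy direction. For the converse I would argue from $\tau_{2,\mathrm{tors}}=0$ alone: inspecting the twelve coefficients in (\ref{tau2t}), each is of the form $\tfrac{1}{3}W_k - Z_{...}$, and one may solve this linear system to recover every $Z_{ijk}$ as a combination of the $W_k$ and the vanishing $\tau_2$-coefficients. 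Combining with $\tau_{1,\mathrm{tors}}=0$ (which forces all $W_i=0$) then yields $Z_{ijk}=0$ for all $ijkl$ in direct order. Since the paper already asserts in the opening line of section \ref{TcoaZtt} that ``$Z_{ijk}=0$ is necessary and sufficient for the vanishing of $\tau_{1,\mathrm{tors}}$ and $\tau_{2,\mathrm{tors}}$,'' this step is really just a pointer back to proposition \ref{tau1tau2compodetorsao} and the structure of (\ref{tau1t})--(\ref{tau2t}).

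Next, $(2)\Leftrightarrow(3)$ is immediate from the identity $\dastT=-\beta\sigma-(\mu T)\alpha_1+\mu(T\alpha_1)$ defining $\dastT$: condition (3) is precisely the statement $\dastT=0$ written out, so it coincides with $\tau_{1,\mathrm{tors}}=\tau_{2,\mathrm{tors}}=0$ because $\dastT=\tau_{1,\mathrm{tors}}\wedge*\phi+*\tau_{2,\mathrm{tors}}$ is a direct-sum decomposition into distinct $G_2$-irreducible components, hence vanishes iff both pieces do. The only care needed is to note the sign convention: (3) displays $\beta\sigma+(\mu T)\alpha_1-\mu(T\alpha_1)=0$, which is exactly $-\dastT=0$. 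The equivalence $(1)\Leftrightarrow(4)$ then follows by reading off (\ref{mutbetamusigmatalpha2}): its twelve coefficients are, up to sign, exactly the twelve quantities $Z_{132},Z_{023},Z_{310},\ldots,Z_{012}$, so the whole 4-form $(\mu T)\beta-\mu\sigma-T\alpha_2$ vanishes if and only if every $Z_{ijk}=0$, which is condition (1).

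I expect the main obstacle to be the converse half of $(2)\Rightarrow(1)$, namely verifying that the twelve $\tau_2$-coefficients in (\ref{tau2t}) together with the four $W_i=0$ relations genuinely determine all the $Z_{ijk}$ and do not leave some combination undetermined. Because the $Z_{ijk}$ are not independent as $(i,j,k,l)$ ranges over direct orderings (there are symmetry relations built into the definition (\ref{osZes})), one must check the rank of the linear map $\{Z_{ijk}\}\mapsto\{\text{coefficients of }\tau_1,\tau_2\}$ is full. In practice this is handled by the bookkeeping already done: each $e^{ab}$-coefficient of $\tau_{2,\mathrm{tors}}$ isolates a single $\tfrac13 W_k-Z_{...}$, so once the $W_k$ are known to vanish the corresponding $Z_{...}$ is read off directly, and one confirms that every index triple appears. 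This is tedious but routine, and I would present it as a remark that the reader may verify from the tables of $a_i,b_i,c_i$ in the proof of proposition \ref{tau1tau2compodetorsao}.
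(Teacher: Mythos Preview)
Your approach is correct and matches the paper's own justification, which is simply the paragraph immediately preceding the proposition (the proposition itself is stated without a formal proof environment): the paper points back to proposition \ref{tau1tau2compodetorsao} for $(1)\Leftrightarrow(2)$, to the identity defining $\dastT$ for $(2)\Leftrightarrow(3)$, and to equation (\ref{mutbetamusigmatalpha2}) for $(1)\Leftrightarrow(4)$. Your anticipated obstacle in $(2)\Rightarrow(1)$ is not a genuine difficulty: the twelve coefficients displayed in (\ref{tau2t}) involve twelve \emph{distinct} $Z_{ijk}$, one for each direct ordering, so once the $W_i$ vanish each $Z_{ijk}$ is read off directly from a single coefficient with no linear algebra needed.
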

As an important case to consider, we continue the study of \gwistor\ space tensors assuming $D$ to be of the anti-$Z$ type, cf. section \ref{EstudodaTorcao}. We then have
\begin{equation}\label{dphidestrelaphiemantiZtype}
\begin{split}
\dx\phi=\calR\alpha+(\underline{r}-l)\vol-\beta^2-2\mu\alpha_1\\
\dx*\phi=-\beta\varrho-\rho\vol.\hspace*{1.4cm}
\end{split}
\end{equation}
Summarizing the previous results in this setting we achieve:
\begin{teo}\label{astorsoesenfim}
Suppose $T^D$ is of the anti-$Z$ type. Then
\begin{equation}
\begin{split}
  \tau_0=\frac{2}{7}(\underline{r}-l+6)\quad\qquad\tau_1=-\frac{1}{3}\tilde{R}^\flat\qquad\quad 
\tau_2=\frac{1}{3}\tilde{R}\lrcorner(\phi-3\alpha)\\
\tau_3=\tau'_3+\tfrac{1}{7}(\underline{r}-l-2)(6\alpha-\mu\beta+\alpha_2).\qquad\qquad
\end{split}
\end{equation}
In particular, the \gwistor\ space is never a $G_2$ manifold, ie. its holonomy does not reduce to the exceptional group.
\end{teo}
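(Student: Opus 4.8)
The plan is to read off each of the four torsion forms from the two $G_2$-decompositions (\ref{torsoes0}) using the anti-$Z$ expressions (\ref{dphidestrelaphiemantiZtype}), since every ingredient has already been prepared. First I would dispose of $\tau_1$ and $\tau_2$: under the anti-$Z$ hypothesis Proposition \ref{tau1tau2compodetorsao} gives $\dastT=0$, so $\dx*\phi=\dastC$, and the splitting $\dx*\phi=\tau_1\wedge*\phi+*\tau_2$ is precisely the one produced in Proposition \ref{tau1etau2}. Hence $\tau_1=\tau_{1,\mathrm{curv}}=-\tfrac13\tilde{R}^\flat$ and $\tau_2=\tau_{2,\mathrm{curv}}=\tfrac13\tilde{R}\lrcorner(\phi-3\alpha)$ with no further work. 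The scalar $\tau_0=\tfrac27(\underline{r}-l+6)$ is already established in the preceding proposition; its proof shows the pure-torsion summands of $\dx\phi$ contribute nothing to $(\dx\phi)\wedge\phi$, so the value is unchanged in the anti-$Z$ case and carries over verbatim.

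The heart of the argument is $\tau_3$, extracted from $\dx\phi=\calR\alpha+(\underline{r}-l)\vol-\beta^2-2\mu\alpha_1$. I would substitute Theorem \ref{decompRalpha} for $\calR\alpha$ and the three formulae of Proposition \ref{mualpha1beta2voldecompostos} for $\vol$, $\beta^2$ and $2\mu\alpha_1$, then collect terms according to the target $\dx\phi=\tau_0*\phi+\tfrac34\tau_1\wedge\phi+*\tau_3$. The $*\phi$-component must reproduce $\tfrac27(\underline{r}-l+6)$ (a consistency check on $\tau_0$), the $\tau_1\wedge\phi$-component must equal $\tfrac34\tau_{1,\mathrm{curv}}\wedge\phi$ (so the Lee form agrees with the one already read off from $\dx*\phi$, as forced by (\ref{torsoes0})), and the surviving $*(\,\cdot\,)$ piece is $*\tau_3$. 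Because all three decompositions in Proposition \ref{mualpha1beta2voldecompostos} carry no $\Omega^1_7$ part, the whole $\tau_1\wedge\phi$-component comes from $\calR\alpha$ alone. Gathering the $*(\,\cdot\,)$ contributions, the constant multiples of $6\alpha-\mu\beta+\alpha_2$ coming from $\vol,\beta^2,2\mu\alpha_1$ assemble as $\tfrac17(\underline{r}-l-2)(6\alpha-\mu\beta+\alpha_2)$, giving $\tau_3=\tau'_3+\tfrac17(\underline{r}-l-2)(6\alpha-\mu\beta+\alpha_2)$.

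For the final assertion I would argue by contradiction: were $SM$ a $G_2$ manifold, all four torsion forms would vanish identically. From $\tau_1=-\tfrac13\tilde{R}^\flat=0$ we get $\tilde{R}=0$, which annihilates the $\hat\omega$ terms of $\tau'_3$ and leaves only the pieces $R_{ij0k}\astil e^{ij}e^{k+3}$; each of these carries exactly one vertical index in $\{4,5,6\}$ and so contributes nothing to the purely vertical form $\alpha=e^{456}$. Thus the $e^{456}$-component of $\tau_3$ receives input only from the $-\tfrac17(\underline{r}-l)\phi$ term in $\tau'_3$ and from the $6\alpha$ term, namely $\tfrac17\bigl(-(\underline{r}-l)+6(\underline{r}-l-2)\bigr)=\tfrac17\bigl(5(\underline{r}-l)-12\bigr)$. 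Setting this to zero forces $\underline{r}-l=\tfrac{12}{5}$, whereas $\tau_0=0$ forces $\underline{r}-l=-6$; these are incompatible at every point, so the torsion forms can never all vanish and the holonomy never reduces to $G_2$.

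The main obstacle is the bookkeeping of the middle paragraph: one must keep the $G_2$-type separations straight and verify that the $\Omega^1_7$-free decompositions collapse the entire Lee-form contribution onto $\calR\alpha$, so that substituting into $\dx\phi$ produces no spurious $\tau_1$ and the leftover $\Lambda^3_{27}$ remainder is exactly the stated $\tau_3$; the final contradiction is then an elementary extraction of the single $e^{456}$ coefficient.
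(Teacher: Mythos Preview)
Your proposal is correct and follows essentially the same route as the paper: $\tau_0,\tau_1,\tau_2$ are read off from the earlier propositions, and $\tau_3$ is obtained by substituting Theorem \ref{decompRalpha} and Proposition \ref{mualpha1beta2voldecompostos} into (\ref{dphidestrelaphiemantiZtype}) and collecting the $\Lambda^3_{27}$ remainder. Your contradiction argument for the final assertion is exactly what the paper's one-line remark ``follows from the coefficient of $\alpha$ in $\tau_3$'' is pointing at, only spelled out in full; note that the $\hat\omega$ terms of $\tau'_3$ already have at most one vertical index, so they contribute nothing to $e^{456}$ regardless of whether $\tilde{R}=0$.
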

\begin{proof}
 Indeed, comparing (\ref{dphidestrelaphiemantiZtype}) and proposition \ref{mualpha1beta2voldecompostos} and theorem \ref{decompRalpha}, we find $\tau_3$ given by
\begin{eqnarray*}
 & &\tau'_3-\tfrac{1}{7}(6\alpha-8\mu\beta-6\alpha_2)+ 
(\underline{r}-l)\tfrac{1}{7}(6\alpha-\mu\beta+\alpha_2)-\tfrac{1}{7}(6\alpha+6\mu\beta+8\alpha_2)\\
&=&\tau'_3+\tfrac{1}{7}(\underline{r}-l-2)(6\alpha-\mu\beta+\alpha_2).
\end{eqnarray*}
The last sentence follows, for instance, from the coefficient of $\alpha$ in $\tau_3$.
\end{proof}
Since $\underline{r}$ constant implies $r=\underline{r}g$, we may write another result.
\begin{teo}
Suppose $T^D=0$. Then $\tilde{R}_i=\ric U_i,\ i=1,2,3$, and
\begin{meuenumerate}
\item $\tau_0=0\Leftrightarrow\underline{r}=-6\Leftrightarrow \dx\phi=*\tau_3\Leftrightarrow (M,g)$ is Einstein with Einstein constant $-6$.
\item $\tau_1=0\Leftrightarrow\tau_2=0\Leftrightarrow (M,g)$ is Einstein.
\end{meuenumerate}
\end{teo}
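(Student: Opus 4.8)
The plan is to specialize the machinery already assembled to the torsion-free case $T^D=0$, where the Levi-Civita connection of $M$ is used. The very first observation is that when $T^D=0$, the tilde-curvature quantities collapse: by the symmetries of the Riemann tensor (in particular $R_{ijkl}=R_{klij}$ and the pair skew-symmetries), one checks that $\tilde R_1=R_{0230}+R_{3020}+R_{2102}+R_{3103}$ reduces to $r(e_0,e_4)=(\ric U)_1$, and similarly for $\tilde R_2,\tilde R_3$. This is exactly the claim $\tilde R_i=\ric U_i$, and it identifies $\tilde R^\flat$ with $\rho=(\ric U)^\flat$ restricted to the vertical directions $e_4,e_5,e_6$. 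I would verify this by direct inspection using the definition $r(X,Y)=\Tr{R^D(\cdot,X)Y}$ together with the first Bianchi identity, which holds in the torsion-free setting.

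With that in hand, the two items become statements about when $\ric U$ is controlled by $\underline r=r(U,U)$. For item~2, note $\tau_1=-\frac13\tilde R^\flat$ from theorem~\ref{astorsoesenfim}, so $\tau_1=0$ iff $\tilde R^\flat=0$, i.e.\ $\ric U\perp e_4,e_5,e_6$, meaning the vertical (equivalently $u^\perp$) component of $\ric U$ vanishes at every $u\in SM$. Since $u$ ranges over the whole unit sphere in each fibre, $\ric U$ having no component orthogonal to $u$ for all $u$ forces $\ric u=\lambda(u)u$ for a scalar function; polarizing (using that $r$ is symmetric when $T^D=0$) then gives $\ric=\lambda\,\Id$ with $\lambda$ constant on each fibre, hence a genuine function on $M$, and Schur's lemma (or directly, $\underline r=\lambda$ and the contracted Bianchi identity in dimension $4$) yields $\lambda$ constant, i.e.\ $(M,g)$ Einstein. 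The same pointwise argument shows $\tau_2=\frac13\tilde R\lrcorner(\phi-3\alpha)=0$ iff $\tilde R=0$ iff $\ric U\parallel U$ for all $u$, which is again the Einstein condition; this is consistent with proposition~\ref{tau1etau2}, which already states $\delta\phi=\tilde R\lrcorner\alpha$ vanishes exactly when $(M,g)$ is Einstein. Thus $\tau_1=0\Leftrightarrow\tau_2=0\Leftrightarrow$ Einstein.

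For item~1, I would read off $\tau_0=\frac27(\underline r-l+6)$ from theorem~\ref{astorsoesenfim} and set $l=0$ since $l$ vanishes in the torsion-free context (it is a multiple of torsion terms, as noted after~(\ref{deflem})). Hence $\tau_0=\frac27(\underline r+6)$, so $\tau_0=0\Leftrightarrow\underline r=-6$. The point flagged in the statement, that $\underline r$ constant implies $r=\underline r\,g$, means $\underline r=-6$ as a constant function is equivalent to $(M,g)$ being Einstein with Einstein constant $-6$: if $\underline r=r(u,u)=-6$ for all unit $u$, then the symmetric form $r+6g$ vanishes on all unit vectors, hence identically, giving $\ric=-6\,\Id$. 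The chain $\tau_0=0\Leftrightarrow\underline r=-6\Leftrightarrow\dx\phi=*\tau_3$ then follows because, with $T^D=0$ and $\tau_0=\tau_1=0$ (the latter from $\underline r=-6$ Einstein forcing $\tau_1=0$ by item~2), the only surviving torsion component in~(\ref{torsoes0}) is $*\tau_3$.

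The main obstacle I anticipate is the bookkeeping in the first step: confirming that each $\tilde R_i$ genuinely simplifies to a single Ricci component. The definitions of $\tilde R_1,\tilde R_2,\tilde R_3$ mix four curvature entries with specific index patterns, and collapsing them to $r(e_0,e_{i+3})$ requires carefully applying the Bianchi identity and the block symmetry $R_{ijkl}=R_{klij}$ — one must check, for instance, that the two ``$R_{\cdot\cdot 0\cdot}$'' type terms and the two ``$R_{\cdot\cdot\cdot 0}$'' type terms recombine correctly rather than cancel. Once this identification is secured, the rest is a clean sphere-fibrewise argument: the Einstein condition is precisely the statement that the sectional/Ricci data seen by $SM$ is isotropic in the vertical directions, and both $\tau_1$ and $\tau_2$ measure the anisotropic (traceless, $u^\perp$-valued) part of $\ric U$.
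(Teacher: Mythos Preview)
Your proposal is correct and follows the same route the paper intends: the paper does not supply a separate proof of this theorem but relies on the preceding results (the identification $\tilde R^\flat=\rho$ noted just before Proposition~\ref{tau1etau2}, the formula $\tau_0=\frac{2}{7}(\underline r-l+6)$ with $l=0$ in the torsion-free case, and the remark that $\underline r$ constant forces $r=\underline r\,g$), and your argument spells out precisely those steps. The only minor point is that the collapse $\tilde R_i=(\ric U)_i$ needs just the first Bianchi identity (to kill $R_{0230}+R_{3020}$, etc.), not the block symmetry $R_{ijkl}=R_{klij}$; otherwise the bookkeeping is exactly as you outline.
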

From the point of view of $G_2$ geometry, another classification is due.
\begin{coro}
Suppose $T^D=0$. Then
\begin{meuenumerate}
\item $\tau_0=0\Leftrightarrow\underline{r}=-6\Leftrightarrow\dx\phi\wedge\phi=0\Leftrightarrow \dx\phi=*\tau_3\Leftrightarrow (SM,\phi)$ is of pure type $W_3$.
\item $\tau_1=0\Leftrightarrow\tau_2=0\Leftrightarrow(SM,\phi)$ is co-calibrated.
\end{meuenumerate}
\end{coro}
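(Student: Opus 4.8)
The plan is to obtain the corollary as a dictionary that translates the immediately preceding theorem (the case $T^D=0$) into the language of $G_2$ classes recalled in Section \ref{ttf}, needing no new computation beyond one identity. First I would impose $T^D=0$, so that $l=0$ and the earlier proposition reduces to $\tau_0=\frac{2}{7}(\underline{r}+6)$. Hence $\tau_0\equiv 0$ is equivalent to $\underline{r}=-6$ on all of $SM$; since $\underline{r}(u)=r(u,u)$ for unit $u$ and $r$ is symmetric when $T^D=0$, polarizing upgrades this to $r=-6g$, i.e. $(M,g)$ Einstein with constant $-6$. These equivalences, together with $\tau_0=0\Leftrightarrow\dx\phi=*\tau_3$, are precisely item 1 of that theorem, which I would simply invoke.

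To complete the chain in item 1, I would supply the two remaining links. For $\dx\phi\wedge\phi=0\Leftrightarrow\tau_0=0$, recall from the proof computing $\tau_0$ that wedging the $4$-form $\dx\phi$ with $\phi$ annihilates the $*\Omega^3_7\oplus*\Omega^3_{27}$ part and leaves $\dx\phi\wedge\phi=7\tau_0\Vol_{SM}$; as $\Vol_{SM}$ is nowhere zero, the wedge vanishes exactly when $\tau_0=0$. For the equivalence with pure type $W_3$, one direction is definitional: by (\ref{torsoes0}) such a structure has $\tau_0=\tau_1=\tau_2=0$. The converse is the only point carrying content: $\tau_0=0$ forces $\underline{r}=-6$, hence $M$ Einstein, whereupon item 2 of the preceding theorem delivers $\tau_1=\tau_2=0$ at no extra cost, so the structure is of pure type $W_3$.

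For item 2 I would argue from the definitions alone. A $G_2$ structure is co-calibrated iff $\tau_1=\tau_2=0$, equivalently $\delta\phi=0$. Since the preceding theorem already gives $\tau_1=0\Leftrightarrow\tau_2=0$ in the torsion-free case, each single vanishing coincides with the simultaneous one, hence with co-calibration; this closes item 2.

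The main obstacle is conceptual rather than computational: one must observe that in the torsion-free gwistor space the torsion forms are interdependent, so that the scalar condition $\tau_0=0$ is not isolated but entails the Einstein equation and therefore also forces $\tau_1=\tau_2=0$. Once this collapse is recognized, the corollary follows formally from the preceding theorem, the class definitions in Section \ref{ttf}, and the identity $\dx\phi\wedge\phi=7\tau_0\Vol_{SM}$.
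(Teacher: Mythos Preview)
Your proposal is correct and follows precisely the route the paper intends: the corollary is stated without proof because it is a direct translation of the preceding theorem into the $G_2$ classification language of Section~\ref{ttf}, using only the identity $\dx\phi\wedge\phi=7\tau_0\,\Vol_{SM}$ from the proof of the proposition computing $\tau_0$, together with the observation that $\tau_0=0$ forces the Einstein condition and hence, via item~2 of the preceding theorem, also $\tau_1=\tau_2=0$. You have supplied exactly the details the paper leaves implicit.
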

An example of a Riemannian manifold in the first case is the hyperbolic 4-space of constant sectional curvature $-2$. Still in the Levi-Civita case, suppose $R^D(X,Y)Z=c(\langle Y,Z\rangle X-\langle X,Z\rangle Y)$. Then it is easy to see $r=3cg$ and $\tilde{R}_i=0,\ \forall i$.
\begin{prop}
If $M$ has constant sectional Riemannian curvature $c$, then $\tau_1=\tau_2=0$ and
\begin{equation}
\tau_0=\frac{6}{7}(c+2),\qquad\ \ \tau_3=\frac{1}{7}\bigl((15c-12)\alpha+(2-6c)\mu\beta-(c+2)\alpha_2\bigr).
\end{equation}
\end{prop}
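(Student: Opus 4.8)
The plan is to specialize the general anti-$Z$-type formulae of Theorem \ref{astorsoesenfim} to the constant-curvature case, where the torsion vanishes (so $T^D=0$ trivially makes $D$ of anti-$Z$ type) and the curvature tensor is fully determined. First I would record the consequences of $R^D(X,Y)Z=c(\langle Y,Z\rangle X-\langle X,Z\rangle Y)$ on all the scalar and tensor quantities appearing in the torsion forms. The Ricci endomorphism is $\ric=3c\,\Id$, hence $\underline r=r(U,U)=3c$ on all of $SM$, and $\tilde R_i=(\ric U)_i=3c\,U_i=0$ in the adapted frame since $U=e_0$ is orthogonal to $e_1,e_2,e_3$; this already forces $\tilde R=0$, so by Proposition \ref{tau1etau2} we get $\tau_1=-\frac13\tilde R^\flat=0$ and $\tau_2=\frac13\tilde R\lrcorner(\phi-3\alpha)=0$, giving the first assertion.

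Next I would compute $l=R_{1230}+R_{2310}+R_{3120}$. For the constant-curvature tensor each term $R_{ijk0}=\langle R^D(e_i,e_j)e_k,e_0\rangle=c(\langle e_j,e_k\rangle\langle e_i,e_0\rangle-\langle e_i,e_k\rangle\langle e_j,e_0\rangle)$ vanishes, because the indices $1,2,3$ are all orthogonal to $0$; hence $l=0$. Substituting $\underline r=3c$ and $l=0$ into $\tau_0=\frac27(\underline r-l+6)$ from Theorem \ref{astorsoesenfim} immediately yields $\tau_0=\frac27(3c+6)=\frac67(c+2)$, matching the claimed value.

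For $\tau_3$ I would start from the anti-$Z$-type expression $\tau_3=\tau_3'+\frac17(\underline r-l-2)(6\alpha-\mu\beta+\alpha_2)$. Since $\tilde R=0$ all the $\hat\omega$-correction terms in $\tau_3'$ (Theorem \ref{decompRalpha}) drop out, leaving $\tau_3'=-\frac17(\underline r-l)\phi+\sum_i(R_{ij0,i}\,\astil e^{ij})e^{i+3}$; but the same orthogonality argument shows every $R_{ij0k}$ with $k\in\{1,2,3\}$ reduces to $c$ times a product of Kronecker deltas among the horizontal indices, which I would evaluate term by term to re-express $\sum_i(R_{ij0,i}\astil e^{ij})e^{i+3}$ as a multiple of the basic forms $\alpha,\mu\beta,\alpha_2$. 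Here the cleanest route is to use $\phi=\alpha+\hat\omega_1e^4+\hat\omega_2e^5+\hat\omega_3e^6$ together with the frame identities from Proposition \ref{bse} to collect coefficients; with $\underline r-l=3c$ one then has $\tau_3'=-\frac{3c}{7}\phi+(\text{curvature 3-form})$, and adding $\frac17(3c-2)(6\alpha-\mu\beta+\alpha_2)$ should collapse to the stated $\frac17\bigl((15c-12)\alpha+(2-6c)\mu\beta-(c+2)\alpha_2\bigr)$.

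The main obstacle I anticipate is the bookkeeping in the curvature 3-form inside $\tau_3'$: one must correctly expand each $\astil e^{ij}$ against the specific anti-selfdual combinations and sort the resulting twelve-or-so monomials into the $\alpha$, $\mu\beta$, $\alpha_2$ channels without sign errors, then verify that the $\phi$-contributions from $\tau_3'$ and from the $\frac17(\underline r-l-2)(\cdots)$ term combine consistently (in particular that no spurious $*\phi$ or $\Omega^3_7$ piece survives, as guaranteed abstractly by Proposition \ref{mualpha1beta2voldecompostos}). A useful consistency check, which I would use to pin down the overall normalization, is to compare the coefficient of $\alpha$ on both sides: the known $\tau_0$ already fixes the $*\phi$-part, so the remaining identity is a purely linear-algebraic matching of three coefficients, and confirming these three numbers $15c-12$, $2-6c$, $-(c+2)$ simultaneously is the decisive step.
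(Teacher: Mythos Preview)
Your plan is correct and follows exactly the route the paper takes: specialize Theorem~\ref{astorsoesenfim} using $\underline r=3c$, $l=0$, $\tilde R=0$, then assemble $\tau_3$ from $\tau_3'$ plus $\tfrac{1}{7}(3c-2)(6\alpha-\mu\beta+\alpha_2)$. The only place where you overestimate the difficulty is the ``bookkeeping'' in the curvature 3-form: since $R_{ij0k}=c(\delta_{j0}\delta_{ik}-\delta_{i0}\delta_{jk})$ vanishes unless one of $i,j$ equals $0$, the sum $\sum_{k}\sum_{i<j}R_{ij0k}\,\astil e^{ij}e^{k+3}$ collapses to three terms and equals $-c\alpha_2$ directly, giving $\tau_3'=-\tfrac{3c}{7}\phi-c\alpha_2$; the final coefficients then drop out by the linear combination you indicate.
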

\begin{proof}
Since $R_{ijkl}=c(\delta_{il}\delta_{jk}-\delta_{ik}\delta_{jl})$ we get as part of $\tau'_3$
\[\sum_{k=1}^3\sum_{i<j}R_{ij0k}\astil e^{ij}e^{k+3}=-c\alpha_2.   \]
Thence $\tau'_3=-\frac{1}{7}3c\phi-c\alpha_2$. And $\tau_3$ follows, (satisfying $\tau_3*\phi=\tau_3\phi=0$).
\end{proof}
Of course it is not possible to have $\tau_3$ proportional to $\phi$. In the case of real $-2$-hyperbolic space $H^4$ we actually find an integrality:
\begin{equation}
\tau_0=0\qquad\quad \tau_3=-6\alpha+2\mu\beta.
\end{equation}
For the Hopf bundle $SS^4_{r_0}=SO(5)/SO(3)$ (standard embbeding) we have
\begin{equation}
 \dx\phi=6*\phi+*(9\alpha-4\mu\beta)
\end{equation}
where we chose $c=\frac{1}{r_0^2}=5$, the radius of the 4-sphere being $r_0$. This choice gave us an integrality again. It is interesting to notice that
\begin{equation}
 7|(c+2)\qquad\Rightarrow\qquad7|(15c-12)\quad\mbox{and}\quad7|(2-6c).
\end{equation}
The homogeneous space $S\C\Proj^2$ equals $SU(3)/U(1)$, clearly, and analogously with the hyperbolic Hermitian space $\C H^2$ (see below). The equations for $M$ a complex surface, in general, will be deduced in a separate article. For the moment we conjecture that the \gwistor\ space of $\C\Proj^2$ is not one of the well known nearly-parallel $G_2$ structures, cf. \cite{FriKaMoSe}, on Aloff-Wallach spaces. The case of $S^4$ already diverges from standard $S^7$ with nearly parallel structure. We do not know of any other $G_2$ structure on $S\C H^2=SU(1,2)/U(1)$ besides the present.

The real hyperbolic 4-space $H^4=SO_0(1,4)/SO(4)$, the sphere $S^4$, the hyperbolic Hermitian space $\C H^2=SU(1,2)/S(U(1)\times U(2))$ and $\C\Proj^2$ are the only irreducible Riemannian symmetric 4-spaces in which the action of the isotropy subgroup on $S_xM$, $\forall x\in M$, is transitive, cf. \cite{Besse}. This property characterizes rank 1 symmetric spaces (in any dimension). Those four are all Einstein.

According to \cite{Ishi} there are only six homogeneous Riemannian 4-manifolds: $H^4$, $S^4$, $\C\Proj^2$, the Euclidean 4-space seen in section \ref{ttgs}, $S^2\times S^2$ and $S^2\times\R^2$. But the classification of Einstein homogeneous 4-manifolds in \cite{Jen} gives more detail. They consist of the above four of rank 1, $\R^4$ flat, the product of two equal metric round spheres and the product of two equal metric real Poincar\'e disks $H^2$.

\section{Torsion of metric connections on 4-manifolds}
\label{EstudodaTorcao}

The study of the space of torsion tensors of metric connections was done by \'E. Cartan, cf. \cite{AgriThi}. Thence we know that under the orthogonal group $O(n,\R)$ there is the irreducible decomposition
\begin{equation}\label{decomptorsion}
 \Lambda^2\rn\otimes\rn=\rn\oplus{\cal A}\oplus \Lambda^3\rn
\end{equation}
where $\rn\oplus{\cal A}$ is the kernel of the Bianchi map $T_{ijk}\mapsto\bx T_{ijk}$ and ${\cal A}\oplus\Lambda^3$ is the kernel of the trace map $T_{ijk}\mapsto\sum_jT_{ijj}$. Clearly, ${\cal A}$ is the intersection of the two. Under the special orthogonal group the decomposition is the same as above, except in dimension 4. The so called metric connections with vectorial torsion, component $\R^n$, are characterized by their difference with the Levi-Civita connection being the tensor
\begin{equation}\label{vectorsion1}
 B_XY=\nu(X)Y-\langle X,Y\rangle \nu^\sharp
\end{equation}
with $\nu$ a fixed 1-form. Equivalently, $T^D=\nu\wedge1$.

Suppose we are in dimension $n=4$. Then $SO(4)=SO(3)\times_{\Z_2}SO(3)$ and
\begin{equation}
 {\cal A}={\cal A}_+\oplus{\cal A}_-
\end{equation}
is a decomposition induced by selfdual and anti-selfdual 2-forms. We have $\dim{\cal A}_+=\dim{\cal A}_-=8$. Notice $\Lambda^3\R^4=\R^4$ by duality so we have a second kind of vectorial torsion given by
\begin{equation}\label{vectorsion2}
 T^D={\cal X}\lrcorner\vol_M
\end{equation}
for some fixed vector field ${\cal X}$ on $M$. The coincidence of dimensions allows for the existence of invariant 4-subspaces inside $\Lambda^3\R^4\oplus\R^4$. In fact we shall describe the two subspaces ${\cal C}_+$ and ${\cal C}_-$ such that
\begin{equation}
 \Lambda^2_\pm\rn\otimes\rn={\cal C}_\pm\oplus{\cal A}_\pm.
\end{equation}

Suppose $D$ has torsion of the first vectorial kind: in an orthonormal basis, $\nu=\sum_{i=0}^3\nu_ie^i$ and  $T_{ijk}=\nu_\alpha(\delta_{\alpha i}\delta_{jk}-\delta_{\alpha j}\delta_{ik})$. Its trace is $3\nu$. One may compute the relevant tensors appearing in the torsion components of the \gwistor\ space structure studied in previous chapters, cf. proposition \ref{derivacoesexteriores}. We lift $\nu$ to a horizontal 1-form. Then it is easy to see, though we omit computations, that $\mu T=\nu\mu,\ T\alpha_1=\nu\alpha_1,\ T\alpha_2=2\nu\alpha_2$ and $\sigma=\beta\nu$. Then $\dx(\mu T)=(\dx\nu)\mu-\nu(-\beta+\mu T)=\mu\dx\nu+\sigma$. Since $\dx(\mu T)=\varrho+\sigma$, we conclude $\varrho=\mu\dx\nu$ and so $l=\varrho_{123}=0$. Moreover, those expressions appearing in $\dx\phi$ and $\dx*\phi$, cf. (\ref{dphiedestrelaphi1}) and (\ref{dphiedestrelaphi2}), which we saw to depend only on the torsion of $D$, verify
\begin{equation}\label{torsaovectorialtau1asterisophi}
\begin{split}
 -\beta\sigma-(\mu T)\alpha_1+\mu(T \alpha_1)\quad\qquad\qquad\\
=-\nu\beta^2-\nu\mu\alpha_1+\mu\nu\alpha_1=2\nu*\phi
\end{split}
\end{equation}
and so
\begin{equation}
\begin{split}
 (\mu T)\beta-\mu\sigma-T\alpha_2 =2\nu\mu\beta-2\nu\alpha_2=2\nu\phi-2\nu\alpha\quad\\
=\tfrac{3}{4}(2\nu)\phi+*\bigl(\tfrac{1}{2}\nu^\sharp\lrcorner(3\vol+\tfrac{1}{2}\beta^2+\mu\alpha_1)\bigr)
\end{split}
\end{equation}
It is worth knowing the formulae $*\nu\alpha=-\nu^\sharp\lrcorner\vol$, $*\mu\nu\beta=-\nu^\sharp\lrcorner\frac{1}{2}\beta^2$ and $*\nu\alpha_2=-\nu^\sharp\lrcorner\mu\alpha_1$. This was an example of a computation of $\tau_{3,\mathrm{tors}}$, one case of which we did not present in general. Finally, we also deduce the trace $m=3\mu(\nu^\sharp)$.

Now let us see the case of skew-symmetric vectorial torsion. Suppose ${\cal X}\lrcorner\vol= x_0e^{123}-x_1e^{023}+x_2e^{013}-x_3e^{012}$. Then a few computations let us find $T\alpha_1={\cal X}\lrcorner -\frac{1}{2}\mu\beta^2$, $T\alpha_2=-{\cal X}^\flat\mu\beta$, $\mu T=-{\cal X}\lrcorner\alpha_3$ and $\sigma={\cal X}\lrcorner\mu\alpha_2$. Clearly $m=0$. Thence $\dx(\mu T)=-{\cal L}_{\cal X}\alpha_3+{\cal X}\lrcorner\dx\alpha_3$ from which, since $\dx\alpha_3=\mu\alpha_2+m\vol$, follows $\varrho=-{\cal L}_{\cal X}\alpha_3$. Moreover, 
\begin{equation}\label{torsaoantisimetricatau1asteriscophi}
 -\beta\sigma-(\mu T)\alpha_1+\mu(T \alpha_1)={\cal X}^\flat*\phi
\end{equation}
and so
\begin{equation}
\begin{split}
 (\mu T)\beta-\mu\sigma-T\alpha_2 ={\cal X}^\flat*(-\alpha_2+\mu\beta)={\cal X}^\flat(\phi-\alpha)\\
=\tfrac{3}{4}{\cal X}^\flat\phi+*\bigl(\tfrac{1}{4}{\cal X}\lrcorner(3\vol+\tfrac{1}{2}\beta^2+\mu\alpha_1)\bigr).
\end{split}
\end{equation}
We observe that the expressions of the torsions for the two kinds of metric connections are quite similar (changing ${\cal X}^\flat$ for $2\nu$). The $G_2$ structure does not distinguish if there is classical vectorial torsion or a skew-symmetric torsion.

Now let us adress the question of anti-$Z$ type connection torsions, defined in section \ref{TcoaZtt}. The condition states
\begin{equation}
 Z_{ijk}=T_{ijj}+T_{ikk}+T_{jkl}=0
\end{equation}
for all $ijkl$ in direct order. Clearly, the space of solutions is 12 dimensional. It is the intersection of two larger spaces; that of $\tau_{1,\mathrm{tors}}=0$ and that of $\tau_{2,\mathrm{tors}}=0$. So let us solve these equations first.
Consider the first. It is given by the vanishing of $W_i$, $i=0,1,2,3$, where $W_i=Z_{ijk}+Z_{ilj}+Z_{ikl}$. If we look to equivalent equation (\ref{osWos2}), then we find a 20 dimensional solutions space. If we introduce a decomposition $T^D=T^1+T^2+T^3$ of a solution according to (\ref{decomptorsion}), by the respective order, then we find
\begin{equation}
(\ref{osWos2})\Leftrightarrow 2(3\nu_i)+3T^3_{jkl}=0\Leftrightarrow T^3_{jkl}=-2\nu_i.
\end{equation}
This is, $T^1=\nu\wedge1$ and $T^3=-2\nu^\sharp\lrcorner\vol_M$. This also confirms our previous computations in (\ref{torsaovectorialtau1asterisophi}) and (\ref{torsaoantisimetricatau1asteriscophi}). The space contains the 16 dimensional $\cal A$ part. Let
\[ {\cal C}_\pm=\bigl\{T(X,Y,Z)=\nu(X)\langle Y,Z\rangle-\nu(Y)\langle X,Z\rangle\:\pm\:2\nu^\sharp\lrcorner\vol_M(X,Y,Z) :\,\ \nu\in\R^n\bigr\} . \]
Then one checks that ${\cal C}_{\pm}\subset\Lambda_\pm^2\R^4\otimes\R^4$. Hence the solutions of $\tau_{1,\mathrm{tors}}=0$ are all the tensors in ${\cal A}\oplus{\cal C}_-$.

Now we solve equation $\tau_{2,\mathrm{tors}}=0$. According to (\ref{tau2t}), we have
\begin{equation}
  \frac{1}{3}W_i-Z_{ijk}=0,\ \ \mbox{for all}\ ijkl\ \mbox{in direct order}.
\end{equation}
Henceforth for each $i$ there is a linear system of rank 2
\begin{equation}
 \left\{\begin{array}{l}
 T_{jkk}-T_{jll}+T_{kil}-T_{ilk}=0\\
 T_{jkk}-T_{jii}+T_{lki}-T_{ilk}=0
\end{array}\right. .
\end{equation}
The space of solutions is hence 16 dimensional. Now, vectorial torsion and skew-symmetric torsion clearly satisfy the system, in particular confirming (\ref{torsaovectorialtau1asterisophi}) and (\ref{torsaoantisimetricatau1asteriscophi}): no $\tau_2$ component. So consider the selfdual and traceless torsion tensors $(e^{12}+e^{03})X_a+(e^{13}+e^{20})X_b+(e^{01}+e^{23})X_c$, with $X_a,X_b,X_c$ any vector fields. A simple example $T=(e^{12}+e^{03})e_2+(e^{01}+e^{23})e_0$ in ${\cal A}_+$ proves to be a solution, hence the solutions of $\tau_{2,\mathrm{tors}}=0$ are all the tensors in $\R^4\oplus{\cal A}_+\oplus\Lambda^3\R^4$.

Finally the anti-$Z$ type torsions, $\tau_{1,\mathrm{tors}}=\tau_{2,\mathrm{tors}}=0$ are those in ${\cal C}_-\oplus{\cal A}_+$.

\medskip


\end{document}